\newtheorem{theorem}{Theorem}[section]
\newtheorem{lemma}[theorem]{Lemma}
\newtheorem{proposition}[theorem]{Proposition}
\theoremstyle{definition}
\theoremstyle{remark}
\newtheorem{remark}[theorem]{Remark}
\newcommand{\A}{\mathbb{A}}
\newcommand{\GL}{\mathrm{GL}}
\newcommand{\SL}{\mathrm{SL}}
\newcommand{\Sp}{\mathrm{Sp}}
\newcommand{\on}{\operatorname}
\newcommand{\ord}{\on{ord}}
\author[P. Yan]{Pan Yan}
\address{Department of Mathematics, The University of Arizona, Tucson, AZ 85721, USA}
\email{panyan@math.arizona.edu}
\date{\today}
\title{The unramified computation of a Shimura integral for $\mathrm{SL}(2)\times \mathrm{GL}(2)$}
\subjclass[2020]{Primary 11F70; Secondary 22E50, 11F55}
\keywords{Rankin-Selberg integral, $L$-function, Casselman-Shalika formula}
\begin{document}

\begin{abstract}
In this note, we revisit the Rankin-Selberg integral of Shimura type for generic representations of $\mathrm{SL}_2\times \mathrm{GL}_2$, constructed by Ginzburg, Rallis, and Soudry. We give a different and more ``intrinsic'' proof of the unramified computation. In contrast to their proof we avoid local functional equation for the general linear groups but use the Casselman-Shalika formulas for unramified Whittaker functions for $\mathrm{SL}_2$ and $\mathrm{GL}_2$. 
\end{abstract}

\maketitle


\section{Introduction}

In \cite{GinzburgRallisSoudry1998}, Ginzburg, Rallis and Soudry constructed global integrals of Shimura type, which represent the (partial) tensor product $L$-function for a pair of irreducible automorphic cuspidal generic representations, one of $\mathrm{Sp}_{2n}$, and the other of $\mathrm{GL}_k$. They presented two different constructions that are ``dual'' to each other: one for the case $n\ge k$ and one for the case $n<k$. 
The integrals for the case $n=k$ were also constructed by Gelbart and Piatetski-Shapiro in a prior work \cite{GelbartPiatetski-Shapiro1987}. These integrals have been used to construct explicit functorial liftings from the general linear groups to the symplectic groups (see \cite{GinzburgRallisSoudry1999, GinzburgRallisSoudry2011}).

In this note, we consider the construction of Ginzburg, Rallis and Soudry \cite{GinzburgRallisSoudry1998} in the low rank case $n=1$, $k=2$.
To give more details about their global integral in this case, let us introduce some notations. Let $F$ be a global field with the ring of adeles $\mathbb{A}$. Let $\psi$ be a non-trivial additive character on $F\backslash \mathbb{A}$. We denote by $\omega_\psi$ the Weil representation of $\widetilde{\mathrm{SL}}_2(\mathbb{A})\ltimes\mathcal{H}(\mathbb{A})$, where $\mathcal{H}$ is the Heisenberg group of dimension 3, corresponding to the character $\psi$. Let $ \widetilde{\theta}_{\Phi}$ be a theta series associated to a Schwartz function $\Phi\in \mathcal{S}(\mathbb{A})$. Let $P=M_P\ltimes N_P$ and $Q=M_Q\ltimes N_Q$ be the Siegel parabolic subgroup and the Klingen parabolic subgroup of $\mathrm{Sp}_4$ respectively. 
The unipotent group $N_Q$ has a structure of the Heisenberg group $\mathcal{H}$; see \eqref{eq-NQ-Heisenberg}.
Let $\tau$ be an irreducible automorphic cuspidal representation of $\mathrm{GL}_2(\mathbb{A})$. 
Let $\widetilde{E}(g, \tilde{f}_{\tau, s})$ be a Siegel Eisenstein series on $\widetilde{\mathrm{Sp}}_4(\mathbb{A})$, associated to a smooth holomorphic $\widetilde{K}$-finite section 
\begin{equation*}
    \tilde{f}_{\tau, s}\in \mathrm{Ind}_{\widetilde{P}(\mathbb{A})}^{\widetilde{\mathrm{Sp}}_4(\mathbb{A})}(\tau \otimes   |\det|^s \otimes \gamma_\psi^{-1} ),
\end{equation*} 
where $\gamma_\psi$ is the inverse of the Weil factor attached to the character $\psi$ regarded as a function on $\GL_2(\A)$ via the pullback of $\gamma_\psi$ by the determinant map, and $\widetilde{K}$ is the preimage of the standard maximal compact subgroup $K\subset \mathrm{Sp}_4(\mathbb{A})$ in $\widetilde{\mathrm{Sp}}_4(\mathbb{A})$. 
Let $\pi$ be an irreducible automorphic cuspidal $\psi^{-1}$-generic representation of $\mathrm{SL}_2(\mathbb{A})$. For a cusp form $\varphi_\pi\in V_\pi$, we consider the following integral
\begin{equation}
I(\varphi_\pi,  \widetilde{\theta}_{\Phi},\widetilde{E}(\cdot, \tilde{f}_{\tau, s}))= \int\limits_{\mathrm{SL}_2(F)\backslash \mathrm{SL}_2(\mathbb{A})} \int\limits_{N_Q(F)\backslash N_Q(\mathbb{A})}  \varphi_\pi(g)  \widetilde{\theta}_{\Phi}(ug) \widetilde{E}(u t(g), \tilde{f}_{\tau, s})du dg.
\label{eq-introduction-global-integral}
\end{equation}
Here, for $g\in \text{SL}_2$, $t(g)=\mathrm{diag}(1, g, 1)$. We remark that the function $\widetilde{\theta}_{\Phi}(ug) \widetilde{E}(ut(g), f_{\tau, s})$ can be viewed as a function on $\mathrm{SL}_2(\mathbb{A})\ltimes N_Q(\mathbb{A})$, and \eqref{eq-introduction-global-integral} is the special case when $n=1, k=2$ of the integrals considered in \cite[Section 5]{GinzburgRallisSoudry1998}.

For $h\in \widetilde{\mathrm{Sp}}_4(\mathbb{A})$, we denote
\begin{equation*}
\begin{split}
\tilde{f}_{\mathcal{W}(\tau,\psi_2), s}(h)= \int\limits_{F\backslash \mathbb{A}}  \tilde{f}_{\tau, s}  \left(   \left(\begin{smallmatrix}  1 & x &&\\&1&&\\&&1&-x\\&&&1\end{smallmatrix}\right)  h \right) \psi_2(x)dx
\end{split}
\end{equation*}
where $\psi_2(x)=\psi(2x)$. The model $\mathcal{W}(\tau, \psi_2)$ is the Whittaker model of the representation $\tau$ with respect to the character $\left(\begin{smallmatrix} 1 & x\\ & 1\end{smallmatrix}\right) \mapsto \psi_2(x)=\psi(2x)$. The space of $\mathcal{W}(\tau, \psi_2)$ consists of smooth functions $W_{\phi, \psi_2}$ on $\mathrm{GL}_2(\mathbb{A})$ of the form 
$$
W_{\phi, \psi_2}(g)=\int_{N_2(F)\backslash N_2(\mathbb{A})}\phi(ng)\psi^{-1}_2(n)dn, \ \ \ \phi\in V_\tau, \ \ \ N_{2}=\left\{\left(\begin{smallmatrix} 1 & x\\ & 1\end{smallmatrix}\right) \right\},
$$
and the group $\mathrm{GL}_2(\mathbb{A})$ acts in the space  of $\mathcal{W}(\tau, \psi_2)$ by right translation.

The main results on the integral~\eqref{eq-introduction-global-integral} are summarized in the following theorem. 

\begin{theorem}[Ginzburg-Rallis-Soudry \cite{GinzburgRallisSoudry1998}]
\leavevmode
\begin{enumerate}[(i)]
    \item The integral $I(\varphi_\pi,  \widetilde{\theta}_{\Phi},\widetilde{E}(\cdot, \tilde{f}_{\tau, s}))$ is absolutely convergent when $\mathrm{Re}(s)\gg 0$ and can be meromorphically continued to all $s\in \mathbb{C}$. Moreover, when $\mathrm{Re}(s) \gg 0$, the integral $I(\varphi_\pi,  \widetilde{\theta}_{\Phi},\widetilde{E}(\cdot, \tilde{f}_{\tau, s})) $ unfolds to 
    \begin{equation}
    \int\limits_{N_2(\mathbb{A})\backslash \mathrm{SL}_2(\mathbb{A})}   W_{\varphi_\pi}(g)   \int\limits_{N_Q^0(\mathbb{A})} \omega_\psi(rg)\Phi(1) \tilde{f}_{\mathcal{W}(\tau,\psi_2), s}  \left( \gamma r t(g)  \right) dr dg,
    \label{eq-global-integral-prop-afterunfolding}
    \end{equation}
    where $W_{\varphi_\pi}\in \mathcal{W}(\pi, \psi^{-1})$ is the $\psi^{-1}$-Whittaker function of $\varphi_\pi$, and
    \begin{equation*}
    \gamma=\left(\begin{smallmatrix} & 1 &&\\ &&&-1\\1&&&\\&&1& \end{smallmatrix}\right), \, \, \, N_Q^0=\left\{ \left(\begin{smallmatrix} 1 & x &  & z\\ &1 & &\\ &&1&-x\\&&&1 \end{smallmatrix} \right)\right\}\subset N_Q.
    \end{equation*}
    \label{thm-Introduction-part(i)}\\
    
    \item At a finite local place $\nu$ when all data are unramified and normalized, the local integral is equal to 
\begin{equation*}
 \frac{L(\pi_\nu\times \tau_\nu, s+\frac{1}{2})}{L(\tau_\nu, \mathrm{Sym}^2, 2s+1)} .
\end{equation*}\label{thm-Introduction-part(ii)}
\end{enumerate}
\label{thm-Introduction}
\end{theorem}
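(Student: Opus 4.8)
Our strategy is to compute the local factor directly from the Casselman--Shalika formulas, sidestepping the $\mathrm{GL}_2$ local functional equation used in \cite{GinzburgRallisSoudry1998}. By the unfolding (Theorem~\ref{thm-Introduction}(i)) the integral \eqref{eq-global-integral-prop-afterunfolding} is Eulerian for factorizable data; fix a finite place $\nu$ at which everything is unramified, write $F=F_\nu$, let $q$ be the residue cardinality and $\mathcal{O}\subset F$ the ring of integers, take $W_{\varphi_\pi}$, $\Phi$, $\tilde{f}_{\tau,s}$ to be the normalized spherical vectors and Haar measures giving integral subgroups volume $1$, and let $Z(s)$ be the resulting local factor, so that
\begin{equation*}
Z(s)=\int\limits_{N_2(F)\backslash \mathrm{SL}_2(F)} W^0_\pi(g)\left(\int\limits_{N_Q^0(F)}\omega_\psi(rg)\Phi^0(1)\,\tilde{f}^0_{\mathcal{W}(\tau,\psi_2),s}(\gamma r t(g))\,dr\right)dg,
\end{equation*}
with $W^0_\pi\in\mathcal{W}(\pi_\nu,\psi_\nu^{-1})$ spherical and $\Phi^0=\mathbf{1}_{\mathcal{O}}$. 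First, the outer integral is reduced to a torus integral. The inner integral is right $K$-invariant in $g$: the metaplectic cover splits canonically and compatibly over $\mathrm{SL}_2(\mathcal{O})$ and over $\mathrm{Sp}_4(\mathcal{O})$; $\omega_\psi(k)\Phi^0=\Phi^0$ for $k\in K=\mathrm{SL}_2(\mathcal{O})$; and $t(k)$ lifts into the splitting over $\mathrm{Sp}_4(\mathcal{O})$ and is therefore absorbed by the right-invariance of $\tilde{f}^0$. Combined with the left $N_2$-equivariance that makes \eqref{eq-global-integral-prop-afterunfolding} well defined, the Iwasawa decomposition $\mathrm{SL}_2(F)=N_2(F)\,T(F)\,K$ with $T=\{a(y)\}$ collapses $Z(s)$ to $\int_{F^\times}W^0_\pi(a(y))\,|y|^{-2}\,I(y,s)\,d^\times y$, where $I(y,s)$ denotes the inner integral evaluated at $g=a(y)$.

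Next, $I(y,s)$ is computed. Parametrize $N_Q^0=\{r(x,z)\}$ by its $(1,2)$- and $(1,4)$-entries $x,z$. In the Schr\"odinger model on $\mathcal{S}(F)$ the Heisenberg element $r(x,z)$ acts by translation by $x$ followed by multiplication by $\psi(z)$, while $a(y)$ acts by the normalized dilation $\Phi\mapsto\gamma_\psi(y)\,|y|^{1/2}\,\Phi(y\,\cdot)$, so that $\omega_\psi(r(x,z)a(y))\Phi^0(1)=\gamma_\psi(y)\,|y|^{1/2}\,\psi(z)\,\mathbf{1}_{\mathcal{O}}(y(1+x))$. For the second factor, the Iwasawa decomposition of the explicit matrix $\gamma r(x,z)t(a(y))\in\mathrm{Sp}_4(F)$ with respect to the Siegel parabolic $P$ writes it as $\widehat{A}\cdot n_P\cdot\kappa$ with $\widehat{A}$ in the Levi $M_P\cong\mathrm{GL}_2$, $n_P\in N_P$ and $\kappa\in\mathrm{Sp}_4(\mathcal{O})$; since $\tilde{f}^0_{\tau,s}$ is spherical this gives
\begin{equation*}
\tilde{f}^0_{\mathcal{W}(\tau,\psi_2),s}\big(\gamma r(x,z)t(a(y))\big)=\delta_P(\widehat{A})^{1/2}\,|\det A|^{s}\,\gamma_\psi^{-1}(\det A)\,W^0_\tau(A),
\end{equation*}
where $W^0_\tau\in\mathcal{W}(\tau_\nu,\psi_{2,\nu})$ is the normalized spherical Whittaker function and $A=A(x,z,y)$ is explicit. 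Integrating in $z$ against $\psi(z)$ imposes an integrality constraint; the remaining integral in $x$ is over a coset of a lattice and evaluates to a short geometric sum; the genuine factor $\gamma_\psi(y)$ cancels against $\gamma_\psi^{-1}(\det A)$; and what survives is $I(y,s)$ written as an explicit monomial in $q^{-s}$ and $|y|$ times $W^0_\tau$ evaluated on the diagonal torus of $\mathrm{GL}_2$ at an argument whose exponent is an affine function of $\ord(y)$.

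Then the Casselman--Shalika formulas are inserted. For $\mathrm{GL}_2$, $W^0_\tau(\mathrm{diag}(\varpi^{m},1))=q^{-m/2}\,\frac{\alpha^{m+1}-\beta^{m+1}}{\alpha-\beta}$ for $m\ge 0$ and $0$ otherwise, where $\{\alpha,\beta\}$ is the Satake parameter of $\tau_\nu$. For $\mathrm{SL}_2$, $W^0_\pi(a(\varpi^{n}))$ equals $q^{-n}$ times the character of the $(2n+1)$-dimensional irreducible representation of $\mathrm{SL}_2(\mathbb{C})$ evaluated at the Satake parameter of $\pi_\nu$ (a semisimple class in the dual group $\mathrm{PGL}_2(\mathbb{C})=\mathrm{SO}_3(\mathbb{C})$, whose $3$-dimensional standard representation has eigenvalues $\{t,1,t^{-1}\}$) for $n\ge 0$ and $0$ otherwise. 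Substituting these into $Z(s)=\sum_{n\ge 0}W^0_\pi(a(\varpi^n))\,q^{2n}\,I(\varpi^n,s)$ turns it, for $\Re(s)\gg 0$, into an absolutely convergent geometric series in $q^{-s}$ whose coefficients are symmetric polynomials in $\{t,1,t^{-1}\}$ and in $\{\alpha,\beta\}$.

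Finally, this series must be summed and the sum identified with
\begin{equation*}
\frac{\prod_{1\le i\le j\le 2}(1-\alpha_i\alpha_j\,q^{-2s-1})}{\prod_{i\in\{t,1,t^{-1}\},\ j\in\{\alpha,\beta\}}(1-ij\,q^{-s-1/2})}=\frac{L(\pi_\nu\times\tau_\nu,\,s+\tfrac12)}{L(\tau_\nu,\mathrm{Sym}^2,\,2s+1)}.
\end{equation*}
This last step is the heart of the matter and the principal obstacle: it is exactly where our argument replaces the appeal to the $\mathrm{GL}_2$ local functional equation in \cite{GinzburgRallisSoudry1998} by a direct symmetric-function identity of Cauchy type (equivalently, a partial-fraction computation). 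The other points that require care are the explicit row reduction of $\gamma r(x,z)t(a(y))$ --- which pins down the affine dependence of the $\mathrm{GL}_2$-Whittaker argument on $\ord(y)$ and hence the exponents in the geometric series --- and the metaplectic bookkeeping: one must check that the Weil index $\gamma_\psi$ and the discrepancy between $\psi$ and $\psi_2$ contribute only a constant that the chosen normalization absorbs.
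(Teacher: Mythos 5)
Your outline follows the same route as the paper's Section~\ref{section-unramified} (Iwasawa reduction to the torus, Schr\"odinger-model formulas for $\omega_\psi$, sphericity of $\tilde{f}^0$, Casselman--Shalika for $\SL_2$ and $\GL_2$, then a generating-function summation), but the decisive step is misdescribed and left unproved. After the reduction, the $x$-integral indeed trivializes (for $a\in\mathcal{O}_F$ the factor $\Phi^0(a+x)$ forces $x\in\mathcal{O}_F$, and $u(x,0,0)\in\Sp_4(\mathcal{O}_F)$ is absorbed by sphericity); however, the $z$-integral is \emph{not} cut down to $z\in\mathcal{O}_F$ by ``integrating against $\psi(z)$.'' For $|z|>1$ one has $\gamma u(0,0,z)\gamma^{-1}=\mathrm{diag}(1,z^{-1},z,1)\cdot w\cdot n$ with $w,n$ integral, so the section contributes $\gamma_\psi(z^{-1})^{-1}|z|^{-(s+3/2)}\,W_\tau^0(\mathrm{diag}(a,z^{-1}))$, which is nonzero precisely for $1\le\ord(z^{-1})\le\ord(a)$. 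These terms (the quantity $J_2(a)$ of Proposition~\ref{prop-unramified-J2}) are essential: in the paper's notation $A=\chi(\varpi)$, $a_i=\chi_i(\varpi)$, $X=q^{-(s+1/2)}$, the integral-$z$ part alone ($J_1$) sums to $\left(1+(a_1+a_2)X-\tfrac{A^2+A+1}{A}a_1a_2X^2\right)\big/\left((1-Aa_1X)(1-Aa_2X)(1-A^{-1}a_1X)(1-A^{-1}a_2X)\right)$, which is \emph{not} the asserted ratio of $L$-functions; the numerator $(1+a_1X)(1+a_2X)(1-a_1a_2X^2)$ only emerges after adding the non-integral range of $z$.

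Relatedly, your claim that the Weil index ``contributes only a constant that the chosen normalization absorbs'' is exactly where the second missing ingredient hides. The factor $\gamma_\psi(z^{-1})^{-1}$ stays inside the $z$-integral and produces, for each $m\ge1$, the twisted oscillatory integral $\int_{\mathcal{O}_F^\times}\gamma_\psi(\varpi^{m}u^{-1})^{-1}\psi(\varpi^{-m}u)\,du$; one must show (Lemma~\ref{lemma-local-integral-gamma-psi-vanishing}, resting on Szpruch's evaluation of metaplectic Gauss sums) that this vanishes for $m\ge2$ and equals $q^{-1/2}$ for $m=1$. This is not a removable normalization: without the $\gamma_\psi$-twist the $m=1$ integral is $-q^{-1}$, and the vanishing for $m\ge2$ is what truncates the sum and yields the single factor $a_1a_2\,q^{-(s+1/2)}$ in $J_2$. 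By contrast, the step you single out as the principal obstacle --- summing the resulting series --- is a routine manipulation with complete homogeneous symmetric polynomials (the computation following \eqref{eq-local-zeta-integral2}), not a deep Cauchy-type identity. So to complete your argument you must (a) carry out the explicit decomposition of $\gamma u(0,0,z)$ for $|z|>1$ and track the resulting torus argument of $W_\tau^0$, and (b) evaluate the $\gamma_\psi$-twisted Gauss sums; with those two inputs your outline coincides with the paper's proof. (Assuming part \ref{thm-Introduction-part(i)} as quoted from \cite{GinzburgRallisSoudry1998} is consistent with what the paper itself does.)
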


\begin{remark}
We refer the reader to \cite[Theorem 5.1]{GinzburgRallisSoudry1998} for a proof of Theorem~\ref{thm-Introduction}~\ref{thm-Introduction-part(i)}. We point out that since we use a different isomorphism (see Remark~\ref{remark-1}) between the unipotent group $N_Q$ and the Heisenberg group $\mathcal{H}$ than that of \cite{GinzburgRallisSoudry1998}, after the unfolding process, we get the section $\tilde{f}_{\mathcal{W}(\tau,\psi_2), s}$ in \eqref{eq-global-integral-prop-afterunfolding} (while in \cite[Theorem 5.1]{GinzburgRallisSoudry1998} the global integral unfolds to an expression involving the section $\tilde{f}_{\mathcal{W}(\tau,\psi), s}$).
\end{remark}

The proof of Theorem~\ref{thm-Introduction}~\ref{thm-Introduction-part(ii)} in \cite{GinzburgRallisSoudry1998} is based on similar ideas in \cite{Soudry1993}, and it
involves several key ingredients. The first one is a formal identity, analogous to the one proved in \cite[Section 11.4]{Soudry1993}, which relates between the local integral and certain local gamma factor; see \cite[Proposition 6.1]{GinzburgRallisSoudry1998}. The second one is the application of the Casselman-Shalika formula, from which one obtains the normalizing factor for certain unramified Whittaker functional. The third is the unramified computation for the local integral for $\Sp_{2n}\times \GL_k$ in the case $n\ge k$. Moreover, the proof of Theorem~\ref{thm-Introduction}~\ref{thm-Introduction-part(ii)} in \cite{GinzburgRallisSoudry1998}  also involves the local functional equation for $\mathrm{GL}_n\times \mathrm{GL}_k$ developed in \cite{JacquetPiatetski-ShapiroShalika1983}. 

The goal of this paper is to give a direct and more ``intrinsic'' proof of Theorem~\ref{thm-Introduction}~\ref{thm-Introduction-part(ii)}, i.e., without resorting to the local functional equation for the general linear groups. 
Our method is based on the Casselman-Shalika formulas for $\SL_2$ and $\GL_2$; see Theorem~\ref{thm-local-unramified-main-result}. Henceforth until the end of the paper, we drop the reference to the local place $\nu$ to ease notation.

\begin{remark}
We remark that in \cite{Kaplan2010, Kaplan2012} Kaplan also obtained direct unramified computations for local integrals, but for orthogonal groups.
\end{remark}

\begin{remark}
We also remark that in the general case when $n<k$, there is an additional unipotent integration involving the section in the global integral in \cite{GinzburgRallisSoudry1998} after unfolding (i.e., the global integral unfolds to a triple integral as opposed to a double integral in \eqref{eq-global-integral-prop-afterunfolding}). However, when $n=k-1$ (such as the case $n=1, k=2$ we consider in this paper), this unipotent integration is absent, making the local unramified integral closer to the even rank case for which a direct unramified computation is known. Although we only consider the low rank case $n=1, k=2$ in this paper, we hope that our calculation will shed light on the techniques for direct unramified computation. 
\end{remark}

The organization of this paper is as follows. In Section~\ref{section-preliminaries},  we introduce general notations, the Weil representation, and the $L$-functions that are relevant in this paper. 
In Section~\ref{section-unramified}, we compute the local integral when all data are unramified, and prove Theorem~\ref{thm-Introduction}~\ref{thm-Introduction-part(ii)}.

\section{Preliminaries}
\label{section-preliminaries}
\subsection{Notations}
Let $F$ be a non-Archimedean local field with ring $\mathcal{O}_F$ of integers, with a fixed  uniformizer $\varpi$. 
We assume that the residue characteristic of $F$ is odd, and  let $q$ be the cardinality of the residue field. Let $\ord$ be the valuation function on $F$. The absolute value $|\cdot |$ on $F$ is normalized so that $|\varpi|=q^{-1}$. We fix a  non-trivial additive unramified character $\psi$ of $F$. For any $a\in F^\times$, the character $\psi_a$ is defined by $\psi_a(x)=\psi(ax)$. The local Hilbert symbol is denoted by $(\ , \ )_F$.

For a positive integer $n$, we let $J_n$ be the $n\times n$ matrix
whose antidiagonal entries are ones and all other entries are zeros. We realize the symplectic group $\Sp_{2n}(F)$ as
\begin{equation*}
\Sp_{2n}(F)=\left\{g \in \GL_{2 n}(F) : {}^t g \left(\begin{smallmatrix}
 & J_n\\
 -J_n &
\end{smallmatrix}\right) g=\left(\begin{smallmatrix}
 & J_n\\
 -J_n &
\end{smallmatrix}\right)\right\}.
\end{equation*} 
Let $\widetilde{\mathrm{Sp}}_{2n}(F)$ be the metaplectic double cover of $\mathrm{Sp}_{2n}(F)$, so we have an exact sequence
$$
1  \longrightarrow \{\pm 1\} \longrightarrow \widetilde{\mathrm{Sp}}_{2n}(F)   \longrightarrow \mathrm{Sp}_{2n}(F)   \longrightarrow 1.
$$
As a set, we may write  elements of $\widetilde{\mathrm{Sp}}_{2n}(F)$ as pairs $(g, \varepsilon)$ where $g\in\mathrm{Sp}_{2n}(F)$, $\varepsilon\in \{\pm 1\}$, with group law given by
\begin{equation*}
    (g_1, \varepsilon_1)\cdot (g_2, \varepsilon_2)=(g_1 g_2, \varepsilon_1 \varepsilon_2\cdot c(g_1, g_2) )
\end{equation*}
where $c$ is the Ranga Rao's $2$-cocycle on $\mathrm{Sp}_{2n}(F)$ valued in $\{\pm 1\}$ described in \cite{Rao1993}. 
We refer the reader to \cite{BerndtSchmidt1998} for more details.

In this paper, we will only consider the small rank symplectic groups when $n=1$ and $n=2$.
For $n=2$, the Siegel parabolic subgroup of $\mathrm{Sp}_4(F)$ is denoted by $P(F)$, and the Klingen parabolic subgroup is denoted by $Q(F)$. They have the following Levi decompositions:
\begin{equation*}
P=M_P\ltimes N_P, M_P =\left\{ \left(\begin{smallmatrix} m & \\ & J_2 {}^t m^{-1} J_2\end{smallmatrix}\right):    m\in \mathrm{GL}_2 \right\} , N_P           =\left\{ \left(\begin{smallmatrix} I_2 &Z \\ & I_2\end{smallmatrix}\right):     {}^t Z  J_2 = J_2 Z \right\},
\end{equation*}
\begin{equation*}
Q=M_Q\ltimes N_Q, M_Q   =\left\{ \left(\begin{smallmatrix} a & & \\ & g &\\ &&a^{-1} \end{smallmatrix}\right):    a\in \mathrm{GL}_1, g\in \mathrm{SL}_2 \right\}, N_Q           =\left\{ u(x, y,z)=\left(\begin{smallmatrix} 1 & x & y & z\\ &1 & &y\\ &&1&-x\\&&&1 \end{smallmatrix} \right)\right\}  . 
\end{equation*}

The group $\mathrm{Sp}_2(F)=\mathrm{SL}_2(F)$ embeds into $\mathrm{Sp}_4(F)$ via
\begin{equation*}
t(g):=\left(  \begin{smallmatrix} 1 & &\\ &g &\\&&1 \end{smallmatrix}\right)\in \mathrm{Sp}_4(F), \ \ g\in \mathrm{SL}_2(F).
\end{equation*} 
We regard elements of $\Sp_{2n}(F)$ as elements of $\widetilde{\mathrm{Sp}}_{2n}(F)$ using the trivial section $g\mapsto (g, 1)$ (the map $g\mapsto (g, 1)$ is not a group homomorphism). If $\alpha$ and $\beta$ are genuine functions on $\widetilde{\mathrm{SL}}_{2}(F)$ and $\widetilde{\mathrm{Sp}}_{4}(F)$ respectively, the function $g\mapsto \alpha(g)\beta(t(g))$ is well defined on $\SL_{2}(F)$.

We have the following subgroups of $\mathrm{SL}_2(F)$:
\begin{equation*}
B_{\mathrm{SL}_2}(F)= \left\{ \left(\begin{smallmatrix} a & b \\ & a^{-1}\end{smallmatrix}\right): a\in F^\times, b\in F \right\}, \ \ 
A_2(F)=\left\{ \left(\begin{smallmatrix} a & \\ & a^{-1}\end{smallmatrix}\right):a\in F^\times \right\}, \ \  N_2(F)=\left\{ \left(\begin{smallmatrix} 1 & b\\ & 1\end{smallmatrix}\right):b\in F \right\} .
\end{equation*}

\subsection{The Weil Representation of $\widetilde{\mathrm{SL}}_2(F)\ltimes\mathcal{H}(F)$}
\label{subsection-Weil-representation}
Let $\mathcal{H}(F)$ be the Heisenberg group in three variables, where the multiplication is given by
$$
(x_1, y_1, z_1) (x_2, y_2, z_2)= (x_1+x_2, y_1+y_2, z_1+z_2+x_1y_2-x_2y_1).
$$
The group $\mathrm{SL}_2(F)$ acts on $\mathcal{H}(F)$ via
$$
(x, y, z) \cdot g =((x, y)g, z), \ \ \ g\in \mathrm{SL}_2(F),
$$
where $(x, y)g$ is the usual matrix multiplication. We then form the semi-direct product $\mathrm{SL}_2(F)\ltimes \mathcal{H}(F)$. Associated to the character $\psi$, there is a Weil representation $\omega_\psi$ of the group $ \widetilde{\mathrm{SL}}_2(F) \ltimes \mathcal{H}(F)$, and it can be realized on the Schwartz space $\mathcal{S}(F)$. For $a\in F^\times$, $b\in F$, $(x, y, z)\in \mathcal{H}(F)$ and $\Phi\in \mathcal{S}(F)$, we have the following formulas (see \cite[Sections 2.2 and 2.5]{BerndtSchmidt1998})
\begin{equation*}
\begin{split}
 \omega_\psi  \left( \left(\begin{smallmatrix} a &\\&a^{-1} \end{smallmatrix}\right), \varepsilon \right) \Phi (\xi)       &=  \varepsilon \gamma_\psi(a) |a|^{{1/2}}   \Phi(\xi a),  \\
  \omega_\psi \left( \left(\begin{smallmatrix} 1 & b\\&1 \end{smallmatrix}\right) , \varepsilon\right) \Phi (\xi)       &= \varepsilon \psi(b\xi^2)\Phi(\xi),\\
  \omega_\psi(((x, y, z), \varepsilon) )\Phi(\xi)      & =  \varepsilon\psi(z+2\xi y+xy )\Phi(\xi+x).
\end{split}
\end{equation*}
Here, $\gamma_\psi(a)$ is the inverse of the Weil factor associated with the character $\psi$, which satisfies the following properties (see \cite[Appendix]{Rao1993}):
\begin{equation*}
    \gamma_\psi(a b)=\gamma_\psi(a)\cdot \gamma_\psi(b) \cdot (a,b)_F,\, 
    \gamma_\psi(b^2)=1,\,  \gamma_\psi(ab^2)=\gamma_\psi(a),\,  \gamma_\psi(a)^4=1\, \text{ for all } a, b\in F^\times.
\end{equation*}

The unipotent group $N_Q(F)$ is isomorphic to the Heisenberg group $\mathcal{H}(F)$ via the following map
\begin{equation} 
\label{eq-NQ-Heisenberg}
\begin{split}
N_Q(F)    &\to \mathcal{H}(F)           \\
 \left(\begin{smallmatrix} 1 & x & y & z\\ &1 & &y\\ &&1&-x\\&&&1 \end{smallmatrix}\right)  &\mapsto  (x, y, z).
\end{split}
\end{equation}

\begin{remark}
\label{remark-1}
We remark that the above isomorphism is different from the isomorphism in \cite[Section 1.3]{GinzburgRallisSoudry1998}, by a factor of $2$ on the $y$-coordinate. The isomorphism in \eqref{eq-NQ-Heisenberg} is more natural, while the isomorphism in \cite[Section 1.3]{GinzburgRallisSoudry1998} makes the unipotent matrix $u(0,y,0)\in N_Q(F)$ act by $\psi(y)$ under the Weil representation. We remark that both the isomorphism in \eqref{eq-NQ-Heisenberg} and the isomorphism in \cite[Section 1.3]{GinzburgRallisSoudry1998} are commonly used (see, for example, \cite{GinzburgSoudry2020, Yan2021, Zhang2018}). 
\end{remark}

\subsection{The tensor product $L$-function for $\mathrm{SL}_2\times \mathrm{GL}_2$}
In this subsection we define the $L$-functions that we study in this paper. 

Let $\pi$ be an irreducible unramified representation of $\mathrm{SL}_2(F)$ and $\tau$ be an irreducible unramified representation of $\mathrm{GL}_2(F)$. Note that the $L$-group of $\mathrm{SL}_2$ is $\mathrm{SO}_3(\mathbb{C})$. Let
\begin{equation*}
\begin{split}
t_{\pi}=\mathrm{diag}(b_1, 1, b_1^{-1})
\end{split}
\end{equation*}
be the semisimple conjugacy class in $\mathrm{SO}_3(\mathbb{C})$ attached to $\pi$. Similarly, let 
\begin{equation*}
\begin{split}
t_{\tau}=\mathrm{diag}(a_{1}, a_{2})
\end{split}
\end{equation*}
be the semisimple conjugacy class in $\mathrm{GL}_2(\mathbb{C})$ attached to $\tau$. Then the local tensor product $L$-function for $\pi \times \tau$ is defined by
$$
L(\pi \times \tau, s)=\det(1 - t_{\pi} \otimes t_{\tau} q^{-s})^{-1}.
$$
The local symmetric square $L$-function for $\tau$ is
\begin{equation*}
L(\tau, \mathrm{Sym}^2, s)=(1-a_{1}^2 q^{-s})^{-1} (1-a_{1} a_{2} q^{-s})^{-1} (1-a_{2}^2 q^{-s})^{-1} .
\end{equation*}

\section{A new proof of the unramified computation}
\label{section-unramified}

In this section we compute the local unramified integral  corresponding to the global integral \eqref{eq-global-integral-prop-afterunfolding} after unfolding.  

Let $\pi$ be an irreducible unramified $\psi^{-1}$-generic summand of the induced representation
$$
\mathrm{Ind}_{B_{\mathrm{SL}_2}(F)}^{\mathrm{SL}_2(F)}(\chi) \ \ \text{ (normalized induction)}
$$
where $\chi$ is an unramified quasi-characters of $F^\times$. Let $\tau$ be an irreducible unramified generic principal series representation of $\mathrm{GL}_2(F)$, with 
$$\tau=\mathrm{Ind}_{B_{\mathrm{GL}_2}(F)}^{\mathrm{GL}_2(F)}(\chi_1 \otimes \chi_2) \ \   \text{(normalized induction)}
$$ 
where $\chi_1, \chi_2$ are unramified quasi-characters of $F^\times$. Note that $2$ is used as a subscript for the multiplicative character $\chi_2$, but $\psi_2$ is the additive character given by $x\mapsto \psi(2x)$, and we hope that the use of 2 is clear from the context.

For $a\in F^\times$, recall that $\gamma_\psi(a)$ was defined in Section~\ref{subsection-Weil-representation}. We also regard $\gamma_\psi$ as a function on $\GL_2(F)$ via the pullback of $\gamma_\psi$ by the determinant map.
Consider the space
$$
  \mathrm{Ind}_{\widetilde{P}(F)}^{\widetilde{\mathrm{Sp}}_4(F)}(\mathcal{W}(\tau, \psi_2) \otimes   |\det|^s \otimes \gamma_\psi^{-1} ) \ \ \text{ (normalized induction)},
$$
consisting of smooth functions $\tilde{f}_{\mathcal{W}(\tau,\psi_2), s} $ on the group $\widetilde{\mathrm{Sp}}_4(F)$ which takes values in $\mathcal{W}(\tau, \psi_2)$, i.e., for any $g\in \widetilde{\mathrm{Sp}}_4(F)$, there is a function $W^g_{\tau, s}\in \mathcal{W}(\tau, \psi_2)$ (depending on $g$) such that 
\begin{equation}
\begin{split}
\tilde{f}_{\mathcal{W}(\tau,\psi_2), s}  \left(   \left( \left(\begin{smallmatrix} m & Z\\ & m^* \end{smallmatrix}\right) , \varepsilon\right)  g   \right) = \varepsilon \gamma_{\psi}(\det(m))^{-1} |\det(m)|^{s+\frac{3}{2}} W_{\tau, s}^g(m)
\end{split}
\label{eq-local-induction-space1}
\end{equation}
where $m\in \mathrm{GL}_2(F)$, $\varepsilon\in \{\pm 1\}$. 

The local unramified integral corresponding to the global integral in \eqref{eq-global-integral-prop-afterunfolding} is 
\begin{equation*}
I(W_\pi^0, \Phi^0, \tilde{f}^0_{\mathcal{W}(\tau,\psi_2), s} ) : =  \int\limits_{N_2(F)\backslash \mathrm{SL}_2(F)} W^0_\pi(g)  \int\limits_{N_Q^0(F)} \left(\omega_\psi(rg)\Phi^0\right)(1) \tilde{f}^0_{\mathcal{W}(\tau,\psi_2), s}  \left( \gamma r t(g)  \right) dr dg.
\end{equation*}
Here $W^0_\pi\in \mathcal{W}(\pi, \psi^{-1})$ is the unramified Whittaker function for $\pi$ normalized so that $W_\pi^0(I_2)=1$,  $\Phi^0=\text{1}_{\mathcal{O}_F}$ is the characteristic function of $\mathcal{O}_F$, and 
$\tilde{f}_{\mathcal{W}(\tau,\psi_2), s} ^0 \in  \mathrm{Ind}_{\widetilde{P}(F)}^{\widetilde{\mathrm{Sp}}_4(F)}(\mathcal{W}(\tau, \psi_2) \otimes   |\det|^s \otimes \gamma_\psi^{-1} )$ is the normalized unramified section so that its value at the identity is exactly the normalized unramified Whittaker function $W_\tau^0$ in $\mathcal{W}(\tau, \psi_2)$ which has value 1 at the identity. Note that for $r\in N_Q^0(F)$, the function $g\mapsto \left(\omega_\psi(rg)\Phi^0\right)(1) \tilde{f}^0_{\mathcal{W}(\tau,\psi_2), s}  \left( \gamma r t(g)  \right)$ is well-defined on $\SL_2(F)$. Also, the map $(h, \varepsilon)\mapsto (t(h), \varepsilon)$ is an embedding of $\widetilde{\mathrm{Sp}}_2(F)$ in $\widetilde{\mathrm{Sp}}_4(F)$. To see this, one can take $(g, \varepsilon)=(I_2, 1)\in \widetilde{\mathrm{Sp}}_2(F)$ in the homomorphism given by \cite[(1.28)]{GinzburgSoudry2021} and note that Ranga Rao's $x$-function is trivial on the identity and that $(x(I_2), x(h))_F=1$. Moreover, the integral $I(W_\pi^0, \Phi^0, \tilde{f}^0_{\mathcal{W}(\tau,\psi_2), s} )$ converges absolutely for $\mathrm{Re}(s)\gg 0$  \cite[Proposition 6.5]{GinzburgRallisSoudry1998}. We re-state Theorem~\ref{thm-Introduction}~\ref{thm-Introduction-part(ii)} as follows.

\begin{theorem}\cite[Theorem 6.3]{GinzburgRallisSoudry1998}
For $\mathrm{Re}(s)\gg 0$, we have
\begin{equation}
I(W_\pi^0, \Phi^0, \tilde{f}^0_{\mathcal{W}(\tau,\psi_2), s} ) = \frac{L(\pi\times \tau, s+\frac{1}{2}) }{L(\tau, \mathrm{Sym}^2, 2s+1)} .
\label{eq-local-unramified-main-result}
\end{equation}
\label{thm-local-unramified-main-result}
\end{theorem}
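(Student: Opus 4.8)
The plan is to compute the integral $I(W_\pi^0, \Phi^0, \tilde{f}^0_{\mathcal{W}(\tau,\psi_2), s})$ by first simplifying the inner integration over $N_Q^0(F)$ and reducing the whole expression to an integral of a product of two Whittaker functions against a Weil-representation factor over the torus $A_2(F)$, then evaluating everything via the explicit Casselman--Shalika formulas. First I would use the Iwasawa decomposition $\mathrm{SL}_2(F) = N_2(F) A_2(F) K_{\mathrm{SL}_2}$ to replace the outer integral $\int_{N_2(F)\backslash \mathrm{SL}_2(F)}$ by $\int_{F^\times} (\cdots)\, |a|^{-2}\, d^\times a$ with $g = \mathrm{diag}(a, a^{-1})$, using that all data are unramified and right $K$-invariant, and that $W_\pi^0$ is already the normalized unramified Whittaker function. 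Next I would analyze the inner integral $\int_{N_Q^0(F)} (\omega_\psi(rg)\Phi^0)(1)\, \tilde{f}^0_{\mathcal{W}(\tau,\psi_2),s}(\gamma r t(g))\, dr$: writing $r = u(x, 0, z)$, the Weil-representation formulas from Section~\ref{subsection-Weil-representation} give $(\omega_\psi(r)\Phi^0)(1) = \psi(z)\,\Phi^0(1 + x) = \psi(z)\,\mathbf{1}_{\mathcal{O}_F}(1+x)$, so the support in $x$ is $-1 + \mathcal{O}_F$, and the $z$-integral produces a delta-type condition after one decomposes $\gamma r t(g)$ into the form $\bigl(\begin{smallmatrix} m & Z \\ & m^* \end{smallmatrix}\bigr)\cdot(\text{unipotent})\cdot(\text{compact})$ and reads off $m$ and the Whittaker argument.

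The heart of the computation is the Bruhat/Iwasawa decomposition of $\gamma\, u(x,0,z)\, t(g)$ inside $\mathrm{Sp}_4(F)$. I would carry this out explicitly: multiply the matrices, extract the $\mathrm{GL}_2$-block $m$ of the Siegel-Levi part (its determinant governs the $\gamma_\psi^{-1}|\det|^{s+3/2}$ twist via \eqref{eq-local-induction-space1}), and track the residual unipotent element of $N_P$ acting on $W_{\tau,s}^g$, which via the definition of $\mathcal{W}(\tau,\psi_2)$ and left-equivariance under $N_2$ contributes a character value $\psi_2(\ast)$ that must be matched against the $\psi(z)$ coming from the theta/Weil factor. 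After this matrix manipulation the $z$-integral collapses, the $x$-integral over $-1+\mathcal{O}_F$ evaluates to $1$ (possibly after shifting $x \mapsto -1 + \varpi^k x$ and summing a geometric-type series in $k$ coming from the valuation of the denominator appearing in the Iwasawa decomposition), and one is left with an integral over $a \in F^\times$ of $W_\pi^0\!\bigl(\mathrm{diag}(a,a^{-1})\bigr)\cdot W_\tau^0(\text{something involving } a)\cdot \gamma_\psi(\ast)^{-1}|\ast|^{s+\cdots}$ against $|a|^{-2}\,d^\times a$; the $\gamma_\psi$ factors should cancel because of the $\gamma_\psi(b^2) = 1$ and $\gamma_\psi(ab^2) = \gamma_\psi(a)$ identities once all arguments are squares or absorbed into the genuine normalization.

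At that point I would invoke the Casselman--Shalika formula twice: for $\mathrm{SL}_2$ with $L$-group $\mathrm{SO}_3(\mathbb{C})$ and Satake parameter $\mathrm{diag}(b_1, 1, b_1^{-1})$, and for $\mathrm{GL}_2$ with Satake parameter $\mathrm{diag}(a_1, a_2)$, to express $W_\pi^0$ and $W_\tau^0$ on the torus as finite sums over the respective Weyl-character formulas supported on dominant coweights. The remaining sum over $F^\times$ then becomes a double sum over a lattice of exponents, which I would resum as a rational function in $q^{-s}$, $b_1^{\pm 1}$, $a_1^{\pm 1}$, $a_2^{\pm 1}$; recognizing the numerator as $\prod_{i}(1 - b_1^{\pm 1} a_i q^{-(s+1/2)})^{-1}(1 - a_i q^{-(s+1/2)})^{-1}$, i.e.\ $L(\pi\times\tau, s + \tfrac12)$ (using $t_\pi\otimes t_\tau$ has eigenvalues $b_1^{\pm1}a_i$ and $a_i$), and the denominator as $\prod(1 - a_i a_j q^{-(2s+1)})$ over $1 \le i \le j \le 2$, i.e.\ $L(\tau, \mathrm{Sym}^2, 2s+1)$, finishes the proof. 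I expect the main obstacle to be the bookkeeping in the Iwasawa decomposition of $\gamma\, u(x,0,z)\, t(g)$ — in particular correctly identifying which additive character ($\psi$ versus $\psi_2$) appears where, so that the $z$-integration and the $x$-shift produce exactly the normalizing denominator $L(\tau, \mathrm{Sym}^2, 2s+1)^{-1}$ rather than an off-by-a-substitution variant; this is precisely the point where the choice of isomorphism in Remark~\ref{remark-1} (the factor of $2$ on the $y$-coordinate) enters and must be handled with care.
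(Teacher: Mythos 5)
Your overall strategy---Iwasawa decomposition of the outer $\mathrm{SL}_2$-integral, the explicit Weil-representation formulas to trivialize the $x$-integration, a matrix decomposition of $\gamma\,u(0,0,z)\,t(g)$, and then the Casselman--Shalika formulas for $\mathrm{SL}_2$ and $\mathrm{GL}_2$ followed by a resummation via symmetric functions---is the same route the paper takes. However, there is a genuine gap at the step you describe as ``the $z$-integral collapses'' with ``the $\gamma_\psi$ factors cancelling.'' The $z$-integral does not collapse to a delta-type condition: it splits as $J(a)=J_1(a)+J_2(a)$ according to $z\in\mathcal{O}_F$ versus $z\notin\mathcal{O}_F$. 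The piece $z\in\mathcal{O}_F$ indeed just gives $W_\tau^0(\mathrm{diag}(a,1))$ (Proposition~\ref{prop-unramified-J1}), and the torus-variable Weil factor $\gamma_\psi(a)$ does cancel against the $\gamma_\psi(a)^{-1}$ from the section's equivariance. But for $\ord(z)<0$, decomposing $\gamma u(0,0,z)\gamma^{-1}$ as (torus)(Weyl element)(unipotent) puts $z^{-1}$ into the Siegel Levi, and the section contributes $\gamma_\psi(z^{-1})^{-1}|z^{-1}|^{s+3/2}W_\tau^0(\mathrm{diag}(a,z^{-1}))\,\psi(z)$: the argument $z^{-1}$ of the Weil factor is not a square and is unrelated to $a$, so no identity of the form $\gamma_\psi(ab^2)=\gamma_\psi(a)$ removes it. The computation is saved by the oscillatory integrals $\int_{\mathcal{O}_F^\times}\gamma_\psi(\varpi^{m}u^{-1})^{-1}\psi(\varpi^{-m}u)\,du$, which vanish for all $m\ge 2$ and equal $q^{-1/2}$ for $m=1$ (Lemma~\ref{lemma-local-integral-gamma-psi-vanishing}, resting on Szpruch's metaplectic Gauss-sum computations). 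This ingredient is absent from your proposal, and no amount of bookkeeping in the Iwasawa decomposition substitutes for it.

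Moreover, the surviving $m=1$ shell is not a lower-order correction: it yields $J_2(a)$ as in \eqref{eq-unramified-J2}, carrying the factor $\chi_1(\varpi)\chi_2(\varpi)q^{-(s+1/2)}$, and only the sum of the two resulting series in \eqref{eq-local-zeta-integral2} has numerator factoring as $(1-a_1a_2X^2)(1+a_1X)(1+a_2X)$, which is exactly what produces the division by $L(\tau,\mathrm{Sym}^2,2s+1)$ in \eqref{eq-local-unramified-main-result}. If the $z$-integral really reduced to its unit-ball part and all $\gamma_\psi$'s cancelled, you would obtain only the first series, which is not the claimed quotient. A smaller inaccuracy: the $x$-support is not $-1+\mathcal{O}_F$ with a subsequent geometric series in a shift of $x$; after conjugating $\mathrm{diag}(a,a^{-1})$ past $(x,0,z)$ and changing variables one finds $\Phi^0(a+x)$, so for $a\in\mathcal{O}_F$ (forced by the support of $W_\pi^0$) the $x$-integral is over $\mathcal{O}_F$, is absorbed by the unramified section, and equals $1$ outright.
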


The rest of this section is devoted to a proof of Theorem \ref{thm-local-unramified-main-result}, which is different from \cite{GinzburgRallisSoudry1998}. Our method is based on the Casselman-Shalika formulas for $\SL_2$ and $\GL_2$ which we recall below.

\begin{lemma}\cite[Theorem 5.4]{CasselmanShalika1980II}
Let $a\in F^\times$. Then 
\begin{equation*}
W_\pi^0  \left( \begin{smallmatrix} a & \\ & a^{-1}\end{smallmatrix}\right) =  
 \begin{cases}
|a| \cdot \frac{\chi(\varpi)^{\ord(a)+1}-\chi(\varpi)^{-\ord(a)}}{\chi(\varpi)-1}   &\text{ if } \ord(a)\ge 0 \\
0 & \text{ if }\ord(a)< 0
\end{cases}
\end{equation*}
and 
\begin{equation*}
W_\tau^0\left( \begin{smallmatrix} a & \\&1\end{smallmatrix}\right)=
 \begin{cases}
 |a|^{\frac{1}{2}} \cdot   \frac{\chi_1(\varpi)^{\ord(a)+1}-\chi_2(\varpi)^{\ord(a)+1} }{\chi_1(\varpi)-\chi_2(\varpi)} &\text{ if }\ord(a)\ge 0\\
 0 &\text{ if }\ord(a)<0
 \end{cases}.
 \end{equation*}  
 \label{lemma-CasselmanShalika}
\end{lemma}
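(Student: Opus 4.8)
The plan is to prove both formulas by directly evaluating the Jacquet integral attached to the normalized spherical section, using only the Iwasawa decomposition in these rank-one situations; the general Casselman--Shalika apparatus is not needed. I would realize $\tau=\mathrm{Ind}_{B_{\GL_2}}^{\GL_2}(\chi_1\otimes\chi_2)$ and take the spherical section $f^0$ normalized by $f^0\!\left(\left(\begin{smallmatrix}t_1&*\\&t_2\end{smallmatrix}\right)k\right)=|t_1/t_2|^{1/2}\chi_1(t_1)\chi_2(t_2)$ for $k\in\GL_2(\mathcal{O}_F)$. The $\psi_2$-Whittaker function is then the Jacquet integral
\begin{equation*}
W(g)=\int_F f^0\!\left(w_0\left(\begin{smallmatrix}1&x\\&1\end{smallmatrix}\right)g\right)\psi_2^{-1}(x)\,dx,\qquad w_0=\left(\begin{smallmatrix}&1\\-1&\end{smallmatrix}\right),
\end{equation*}
and $W_\tau^0=W/W(I_2)$ is its normalization to value $1$ at the identity. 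The integral converges absolutely in a cone of unramified parameters (e.g.\ $|\chi_2(\varpi)|>|\chi_1(\varpi)|$), where the shell-by-shell evaluation below is valid; since the result is a rational function of the $\chi_i(\varpi)$, the identity then extends to all unramified parameters.

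The key computation proceeds in three steps. First I would compute the Iwasawa decomposition of $w_0\left(\begin{smallmatrix}1&x\\&1\end{smallmatrix}\right)\left(\begin{smallmatrix}a&\\&1\end{smallmatrix}\right)=\left(\begin{smallmatrix}&1\\-a&-x\end{smallmatrix}\right)$: reading off the bottom row, the Borel part has diagonal $(t_1,t_2)$ with $\ord(t_2)=\min(\ord(a),\ord(x))$ and $\ord(t_1)=\ord(a)-\ord(t_2)$, so that $f^0$ is an explicit monomial in $\chi_1(\varpi),\chi_2(\varpi),q$ that is constant on each shell $\{\ord(x)=m\}$. Second I would evaluate the inner $\psi_2^{-1}$-integral on each shell: using that the residue characteristic is odd, so $|2|=1$, it equals the volume $q^{-m}(1-q^{-1})$ for $m\ge0$, equals $-1$ on the boundary shell $m=-1$, and vanishes for $m\le-2$. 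Third I would assemble the surviving terms. Writing $n=\ord(a)$ and $h_n=\frac{\chi_1(\varpi)^{n+1}-\chi_2(\varpi)^{n+1}}{\chi_1(\varpi)-\chi_2(\varpi)}$, the bulk shells $m\ge n$, the intermediate shells $0\le m<n$, and the boundary shell $m=-1$ combine --- via the elementary identity $\chi_1(\varpi)\,h_n=h_{n+1}-\chi_2(\varpi)^{n+1}$ --- into
\begin{equation*}
W\!\left(\begin{smallmatrix}a&\\&1\end{smallmatrix}\right)=|a|^{1/2}\,h_n\cdot\bigl(1-q^{-1}\chi_1(\varpi)\chi_2(\varpi)^{-1}\bigr),\qquad n\ge0,
\end{equation*}
whereas for $n<0$ the value of $f^0$ is constant across all contributing shells $m\ge-1$, so the bulk sum ($=1$) and the boundary term ($=-1$) cancel and $W=0$. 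Dividing by $W(I_2)=1-q^{-1}\chi_1(\varpi)\chi_2(\varpi)^{-1}$ gives the stated $\GL_2$ formula.

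The $\SL_2$ case runs identically with $\chi_1\otimes\chi_2$ replaced by $\chi$, torus element $\mathrm{diag}(a,a^{-1})$, Jacquet character $\psi$, and the determinant-one constraint $\ord(t_1)=-\ord(t_2)$; now $\delta^{1/2}=|a|$ supplies the prefactor $|a|$, and the single positive root produces the asymmetric numerator $\chi(\varpi)^{n+1}-\chi(\varpi)^{-n}$ over $\chi(\varpi)-1$, with the same bulk/boundary cancellation forcing vanishing for $n<0$. I would note that only the spherical vector ever enters, so it lies in the unramified constituent $\pi$ and the possible reducibility of the $\SL_2$ principal series is irrelevant.

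The main obstacle is the precise bookkeeping in the assembly step: correctly weighting each shell by its volume, tracking the $\delta^{1/2}$ factor, and above all handling the boundary shell $m=-1$, whose contribution is exactly what both completes the clean numerator $h_n$ (through the symmetric-function identity above) for $n\ge0$ and produces the vanishing for $n<0$. Everything else is routine geometric summation; isolating this boundary term and the normalizing constant $W(I_2)$ is where the real content of the proof sits.
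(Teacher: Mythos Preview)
Your argument is correct, but it is worth pointing out that the paper does not actually supply a proof of this lemma: it is stated with a bare citation to \cite[Theorem~5.4]{CasselmanShalika1980II} and used as a black box.  What you have written is therefore not an alternative to the paper's proof but a self-contained replacement for the cited result, and it is a perfectly good one in rank one.

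Compared with the general Casselman--Shalika method, your approach is more elementary: it avoids intertwining operators, the Gindikin--Karpelevich normalization, and the Weyl character formula, relying only on the explicit $p$-adic Iwasawa decomposition of a $2\times2$ matrix and a Gauss-sum computation.  The price is that it does not generalize beyond rank one without essentially reinventing those tools.  One small caveat: the $\SL_2$ computation does not run \emph{quite} identically to the $\GL_2$ one.  After conjugating $\mathrm{diag}(a,a^{-1})$ past $w_0$ and changing variables, the additive character becomes $y\mapsto\psi(a^{2}y)$ rather than $y\mapsto\psi(ay)$; the shell cut-offs then occur at $m=2n$ and $m=2n+1$, and it is the factor $\chi(a)^{-1}=\chi(\varpi)^{-n}$ pulled out in front that converts $\sum_{m=0}^{2n}\chi(\varpi)^{m}$ into the asymmetric numerator $(\chi(\varpi)^{n+1}-\chi(\varpi)^{-n})/(\chi(\varpi)-1)$.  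This is exactly the mechanism you describe, but the ``runs identically'' phrasing undersells the one extra bookkeeping step.
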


Using the Iwasawa decomposition $\mathrm{SL}_2(F)=N_2(F)A_2(F)\mathrm{SL}_2(\mathcal{O}_F)$, we have
\begin{equation*}
\begin{split}
&I(W_\pi^0, \Phi^0, \tilde{f}^0_{\mathcal{W}(\tau,\psi_2), s} )\\
 =         &\int\limits_{N_2(F)\backslash \mathrm{SL}_2(F)} W_\pi^0(g) \int\limits_{N_Q^0(F)} \left(\omega_\psi(rg  )\Phi^0\right)(1) \tilde{f}_{\mathcal{W}(\tau,\psi_2), s} ^0 \left( \gamma r t(g)  \right) dr dg   \\
=     & \int\limits_{F^\times} W_\pi^0 \left(\begin{smallmatrix} a&\\ & a^{-1}\end{smallmatrix}\right) \int\limits_{F^2} \left(\omega_\psi \left( (x, 0, z)     \left(\begin{smallmatrix} a& \\&a^{-1}\end{smallmatrix}  \right)\right)\Phi^0\right)(1)     \tilde{f}_{\mathcal{W}(\tau,\psi_2), s} ^0 \left(\gamma u(x, 0, z) t(a)  \right)  dxdz  |a|^{-2} d^\times a  \\
=      &    \int\limits_{F^\times}  W_\pi^0 \left(\begin{smallmatrix} a&\\ & a^{-1}\end{smallmatrix}\right) \int\limits_{F^2} \left( \omega_\psi( \left(\begin{smallmatrix} a& \\&a^{-1}\end{smallmatrix} \right)  (xa, 0, z)     )\Phi^0\right)(1)     \tilde{f}_{\mathcal{W}(\tau,\psi_2), s} ^0 \left(\gamma t(a) u(xa, 0, z) \right)dxdz   |a|^{-2}  d^\times a.
\end{split}
\end{equation*}
We remind the reader that we have identified an element $r=u(x, 0, z)\in N_Q^0(F)$ with an element $(x, 0,z)$ in the Heisenberg group $\mathcal{H}(F)$.
By a change of variable $x\mapsto xa^{-1}$ and by Lemma \ref{lemma-CasselmanShalika}, we get
$$
 \int\limits_{F^\times\cap \mathcal{O}_F}  W_\pi^0 \left(\begin{smallmatrix} a&\\ & a^{-1}\end{smallmatrix}\right)       \int\limits_{F^2}  \left(\omega_\psi(  \left(\begin{smallmatrix} a& \\&a^{-1}\end{smallmatrix} \right)  (x, 0, z)  )\Phi^0\right)(1)     \tilde{f}_{\mathcal{W}(\tau,\psi_2), s} ^0 \left(\gamma t(a) u(x, 0, z)   \right)dxdz   |a|^{-3}  d^\times a.
$$
Note that 
$$
\left(\omega_\psi( \left( \begin{smallmatrix} a& \\&a^{-1}\end{smallmatrix} \right)  (x, 0, z)    )\Phi^0\right)(1)  =  
 |a|^{1/2} \gamma_{\psi}(a) \left( \omega_\psi( (x, 0, z)  )   \Phi^0 \right)(a).
$$
Conjugating $t(a)$ to the left of $\gamma$, then we have
\begin{equation*}
\begin{split}
\int\limits_{F^\times \cap \mathcal{O}_F}  W_\pi^0 \left(\begin{smallmatrix} a&\\ & a^{-1}\end{smallmatrix}\right) \int\limits_{F^2}  \left( \omega_\psi( (x, 0, z))  \Phi^0\right)(a)       \tilde{f}_{\mathcal{W}(\tau,\psi_2), s} ^0 \left( \left(\begin{smallmatrix} a &&&\\&1&&\\&&1&\\ &&&a^{-1} \end{smallmatrix}\right) \gamma u(x, 0, z) \right)   
  dxdz   \gamma_{\psi}(a)  |a|^{-\frac{5}{2}} d^\times a.
\end{split}
\end{equation*}
Since $\Phi^0$ is supported in $\mathcal{O}_F$, the section $ \tilde{f}_{\mathcal{W}(\tau,\psi_2), s} ^0$ is unramified, and by the formula
$$
\left(\omega_\psi((x,0,z))\Phi^0\right)(a)=\left(\omega_\psi((0,0,z))\Phi^0\right)(a+x)=\psi(z)\Phi^0(a+x) ,
$$
we see that the integration over $x$ evaluates to 1. It's convenient to use Jacquet's style notation for the section $\tilde{f}_{\mathcal{W}(\tau,\psi_2), s} ^0$, which we will use for the remainder of the paper.
Note that
\begin{equation*}
 \tilde{f}_{\mathcal{W}(\tau,\psi_2), s} ^0 \left( I_2; \left(\begin{smallmatrix} a &&&\\&1&&\\&&1&\\ &&&a^{-1} \end{smallmatrix}\right) \gamma u(0, 0, z) \right)  = \gamma_\psi(a)^{-1} |a|^{s+3/2}  \tilde{f}_{\mathcal{W}(\tau,\psi_2), s} ^0 \left(  \left(\begin{smallmatrix} a &\\&1 \end{smallmatrix}\right);    \gamma u(0, 0, z)\right)  
\end{equation*}
since the representation $\mathcal{W}(\tau, \psi_2) \otimes   |\det|^s \otimes \gamma_\psi^{-1}$ acts on the Levi part by 
$$
( \left( \begin{smallmatrix} m & \\ & m^* \end{smallmatrix}\right), 1) \mapsto  \gamma_\psi(\det(m))^{-1} |\det(m)|^{s+3/2} \mathcal{W}(\tau, \psi_2)(m).
$$
Thus
\begin{equation}
\begin{split}
&I(W_\pi^0, \Phi^0, \tilde{f}^0_{\mathcal{W}(\tau,\psi_2), s} )\\
=& \int\limits_{F^\times \cap \mathcal{O}_F}  W_\pi^0 \left(\begin{smallmatrix} a&\\ & a^{-1}\end{smallmatrix}\right) \int\limits_{F}  \left( \omega_\psi( (0, 0, z))  \Phi^0\right)(a)       \tilde{f}_{\mathcal{W}(\tau,\psi_2), s} ^0 \left(  \left(\begin{smallmatrix} a &\\&1  \end{smallmatrix}\right) ; \gamma u(0, 0, z)\right)   
  dz   |a|^{s-1} d^\times a  \\
 =& \sum_{k=0}^\infty   \int\limits_{\varpi^k \mathcal{O}_F^\times}  W_\pi^0 \left(\begin{smallmatrix} a&\\ & a^{-1}\end{smallmatrix}\right) \int\limits_{F}   \left(\omega_\psi( (0, 0, z))  \Phi^0\right)(a)       \tilde{f}_{\mathcal{W}(\tau,\psi_2), s} ^0  \left( \left(\begin{smallmatrix} a &\\&1  \end{smallmatrix}\right) ; \gamma u(0, 0, z) \right)   
  dz      |a|^{s-1} d^\times a \\
  =&  \sum_{k=0}^\infty   \int\limits_{\varpi^k \mathcal{O}_F^\times}  W_\pi^0 \left(\begin{smallmatrix} a&\\ & a^{-1}\end{smallmatrix}\right) J(a)    |a|^{s-1} d^\times a
\end{split}
\label{eq-local-zeta-integral-1}
\end{equation}
where 
$$
J(a) := \int\limits_{F}  \left( \omega_\psi( (0, 0, z))  \Phi^0\right)(a)       \tilde{f}_{\mathcal{W}(\tau,\psi_2), s} ^0  \left( \left(\begin{smallmatrix} a &\\&1  \end{smallmatrix}\right) ; \gamma u(0, 0, z)  \right)   
  dz .
$$
By dividing the domain of $z$ into two parts, we can write
$$
J(a)=J_1(a)+J_2(a)
$$
where
\begin{equation*}
J_1(a) : = \int\limits_{\mathcal{O}_F}   \omega_\psi( (0, 0, z))  \Phi^0(a)       \tilde{f}_{\mathcal{W}(\tau,\psi_2), s} ^0  \left(  \left(\begin{smallmatrix} a &\\&1  \end{smallmatrix}\right) ; \gamma u(0, 0, z)  \right)   
  dz 
\end{equation*}
and 
\begin{equation*}
J_2(a) : =  \int\limits_{F \setminus  \mathcal{O}_F}   \omega_\psi( (0, 0, z))  \Phi^0(a)       \tilde{f}_{\mathcal{W}(\tau,\psi_2), s} ^0  \left(  \left(\begin{smallmatrix} a &\\&1  \end{smallmatrix}\right) ; \gamma u(0, 0, z)  \right)   
  dz .
\end{equation*}

The values of $J_1(a)$ and $J_2(a)$ are computed in the following two propositions. 

\begin{proposition}
For $a\in F^\times \cap \mathcal{O}_F$, we have 
\begin{equation}
J_1(a)=  |a|^{\frac{1}{2}} \cdot \frac{\chi_1(\varpi)^{\ord(a)+1}-\chi_2(\varpi)^{\ord(a)+1}}{\chi_1(\varpi)-\chi_2(\varpi)}.
\label{eq-unramified-J1}
\end{equation}
\label{prop-unramified-J1}
\end{proposition}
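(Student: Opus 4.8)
The plan is to evaluate $J_1(a)$ directly: on the domain $z\in\mathcal{O}_F$ both the Weil-representation factor and the section factor degenerate to constants, and what survives is an unramified $\GL_2$-Whittaker value, to which Lemma~\ref{lemma-CasselmanShalika} applies.

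First I would simplify the Weil-representation factor. The third displayed formula for $\omega_\psi$ in Section~\ref{subsection-Weil-representation}, taken with $x=y=0$ and $\varepsilon=1$, gives $\left(\omega_\psi((0,0,z))\Phi^0\right)(a)=\psi(z)\Phi^0(a)$; since $a\in F^\times\cap\mathcal{O}_F$ we have $\Phi^0(a)=1$, and since $\psi$ is unramified we have $\psi(z)=1$ for every $z\in\mathcal{O}_F$. So on the domain of integration this factor equals $1$.

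Next I would argue that the section factor is independent of $z$ on $\mathcal{O}_F$. The matrix $u(0,0,z)$ lies in the unipotent radical $N_P$ of the Siegel parabolic (its block $Z=\left(\begin{smallmatrix}0&z\\0&0\end{smallmatrix}\right)$ satisfies ${}^tZJ_2=J_2Z$), and for $z\in\mathcal{O}_F$ it lies in $K=\Sp_4(\mathcal{O}_F)$; likewise $\gamma\in K$, being a signed permutation matrix of determinant $1$. As the residue characteristic is odd, the metaplectic cover splits over $K$, so the canonical lift of $\gamma u(0,0,z)$ lies in $\widetilde{K}$, and since $\tilde f^0_{\mathcal{W}(\tau,\psi_2),s}$ is the normalized unramified section it is unchanged under right translation by $\widetilde{K}$. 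Because the value of this section at the identity is, by construction, the normalized unramified $\psi_2$-Whittaker function $W^0_\tau$ of $\tau$, we obtain $\tilde f^0_{\mathcal{W}(\tau,\psi_2),s}\left(\left(\begin{smallmatrix}a&\\&1\end{smallmatrix}\right);\gamma u(0,0,z)\right)=W^0_\tau\left(\begin{smallmatrix}a&\\&1\end{smallmatrix}\right)$.

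Combining the two paragraphs, $J_1(a)=\on{vol}(\mathcal{O}_F)\cdot W^0_\tau\left(\begin{smallmatrix}a&\\&1\end{smallmatrix}\right)=W^0_\tau\left(\begin{smallmatrix}a&\\&1\end{smallmatrix}\right)$ with the usual normalization $\on{vol}(\mathcal{O}_F)=1$; since $\ord(a)\ge 0$, the second formula of Lemma~\ref{lemma-CasselmanShalika} evaluates this to the right-hand side of \eqref{eq-unramified-J1}, completing the proof. The only delicate point — and the step I would check most carefully — is the metaplectic bookkeeping: one must verify that $\gamma u(0,0,z)$ genuinely sits in the splitting $\widetilde{K}$ of $K$ (equivalently, that the relevant Ranga Rao cocycle values vanish on these integral elements), so that ``unramified section'' really does entail invariance under right translation by it. Everything else is immediate.
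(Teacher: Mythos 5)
Your proposal is correct and follows essentially the same route as the paper: the Weil-representation factor reduces to $\psi(z)\Phi^0(a)=1$ on $z\in\mathcal{O}_F$, the right $\widetilde{K}$-invariance of the normalized unramified section removes $\gamma u(0,0,z)$, and Lemma~\ref{lemma-CasselmanShalika} evaluates the resulting $W^0_\tau\left(\begin{smallmatrix}a&\\&1\end{smallmatrix}\right)$. The only difference is that you spell out the details (membership of $u(0,0,z)$ and $\gamma$ in $K$, the splitting of the cover over $K$ in odd residue characteristic) that the paper compresses into the phrase ``since both the section and the character $\psi$ are unramified.''
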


\begin{proof}
Since both the section $\tilde{f}_{\mathcal{W}(\tau,\psi_2), s} ^0$ and the character $\psi$ are unramified, we have 
\begin{equation*}
\begin{split}
J_1(a)  &= \int\limits_{\mathcal{O}_F}   \left(\omega_\psi( (0, 0, z))  \Phi^0\right)(a)      \tilde{f}_{\mathcal{W}(\tau,\psi_2), s} ^0  \left( \left(\begin{smallmatrix} a &\\&1  \end{smallmatrix}\right) ; \gamma u(0, 0, z)  \right)   
  dz   \\
  &= \int\limits_{\mathcal{O}_F}  \psi(z)   \tilde{f}_{\mathcal{W}(\tau,\psi_2), s} ^0  \left( \left(\begin{smallmatrix} a &\\&1  \end{smallmatrix}\right) ; I_4 \right)   
  dz \\
  &=W_\tau^0 \left( \begin{smallmatrix} a&\\  &1\end{smallmatrix}\right) \\
  &= |a|^{\frac{1}{2}} \cdot  \frac{\chi_1(\varpi)^{\ord(a)+1}-\chi_2(\varpi)^{\ord(a)+1}}{\chi_1(\varpi)-\chi_2(\varpi)}
  \end{split}
\end{equation*}
where the last equality follows from Lemma \ref{lemma-CasselmanShalika}.
\end{proof}

\begin{proposition}
For $a\in F^\times \cap \mathcal{O}_F$, we have 
\begin{equation}
J_2(a)=   |a|^{\frac{1}{2}} \cdot \frac{\chi_1(\varpi)^{\ord(a)}-\chi_2(\varpi)^{\ord(a)}}{\chi_1(\varpi)-\chi_2(\varpi)}\cdot \chi_1(\varpi)\chi_2(\varpi) \cdot q^{-(s+1/2)}.
\label{eq-unramified-J2}
\end{equation}
\label{prop-unramified-J2}
\end{proposition}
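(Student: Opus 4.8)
The plan is to reduce $J_2(a)$ to a Whittaker-value computation by analyzing the integrand on the region $F\setminus\mathcal{O}_F$, where the section $\tilde f^0_{\mathcal{W}(\tau,\psi_2),s}$ is no longer evaluated near the identity. First I would write $z = \varpi^{-m}v$ with $m\ge 1$ and $v\in\mathcal{O}_F^\times$, and decompose $u(0,0,z)$ inside $\widetilde{\mathrm{Sp}}_4$. The key point is that $\gamma u(0,0,z)$ must be massaged using the Bruhat decomposition of $\mathrm{Sp}_4$: the matrix $u(0,0,z)$ with large $z$ can be written (up to unipotent pieces that the section absorbs on the left against $\gamma$, and an $\mathrm{Sp}_4(\mathcal{O}_F)$-piece the section absorbs on the right) as a product involving the torus element $t_0 = \mathrm{diag}(\varpi^{-m},\dots)$ acting on the $\mathrm{GL}_2$-coordinate. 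Concretely I expect an identity of the shape
\[
\gamma\, u(0,0,\varpi^{-m}v) = n'\cdot \iota\!\left(\begin{smallmatrix}\varpi^{-m}v & \\ & 1\end{smallmatrix}\right)\cdot \gamma\cdot k,
\]
with $n'$ in the unipotent radical that the section kills (because $\tilde f^0$ is a section in the induced space, invariant on the left under $N_P$ up to the character, and here $\psi$ is unramified so the extra $\psi(\ast)$ is trivial), and $k\in\mathrm{Sp}_4(\mathcal{O}_F)$ which the normalized unramified section ignores. That reduces the inner evaluation to $\tilde f^0_{\mathcal{W}(\tau,\psi_2),s}\big(\left(\begin{smallmatrix}a\varpi^{-m}v&\\&1\end{smallmatrix}\right);\gamma\big)$ times the appropriate power of $|\varpi^{-m}|$ coming from \eqref{eq-local-induction-space1} and the Weil-factor bookkeeping, i.e.\ essentially $\gamma_\psi(\varpi^{-m})^{-1}q^{-m(s+3/2)}W_\tau^0\left(\begin{smallmatrix}a\varpi^{-m}\\&1\end{smallmatrix}\right)$, using that $v\in\mathcal{O}_F^\times$ and everything is unramified.

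Next I would carry out the $z$-integral as a sum over $m\ge1$: $\int_{F\setminus\mathcal{O}_F} = \sum_{m\ge 1}\int_{\varpi^{-m}\mathcal{O}_F^\times}$, with $\mathrm{vol}(\varpi^{-m}\mathcal{O}_F^\times) = q^m(1-q^{-1})$ under the additive measure. Combining with the factor $q^{-m(s+3/2)}$ from the section and the $q^{-m/2}$ from $W_\tau^0\left(\begin{smallmatrix}a\varpi^{-m}\\&1\end{smallmatrix}\right) = |a\varpi^{-m}|^{1/2}\cdot(\dots)$ — valid only when $\ord(a)-m\ge 0$, i.e.\ $1\le m\le\ord(a)$; for $m>\ord(a)$ the Whittaker value vanishes by Lemma~\ref{lemma-CasselmanShalika} — I expect a finite geometric-type sum in $m$. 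One must also be careful with the $\gamma_\psi(\varpi^{-m})^{-1}$ factor: since the residue characteristic is odd and $\psi$ is unramified, $\gamma_\psi(\varpi^{-m})$ is a fourth root of unity depending only on $m\bmod 2$, and I would check that it recombines with a compensating sign so the final answer is clean (this is the standard phenomenon where the metaplectic factors cancel against the Weil-representation factor $\psi(z)$-twist coming from $\omega_\psi((0,0,z))\Phi^0(a) = \psi(z)\Phi^0(a)$, which on $F\setminus\mathcal{O}_F$ contributes the character that selects $\mathrm{vol}(\varpi^{-m}\mathcal{O}_F^\times)$ correctly versus forcing cancellation — here I expect $\psi(z)$ integrated over $\varpi^{-m}\mathcal{O}_F^\times$ to \emph{not} vanish only in the way that produces the stated formula). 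Then substituting the Casselman--Shalika expression for $W_\tau^0$, the double sum telescopes/geometrizes into
\[
|a|^{1/2}\cdot\frac{\chi_1(\varpi)^{\ord(a)}-\chi_2(\varpi)^{\ord(a)}}{\chi_1(\varpi)-\chi_2(\varpi)}\cdot\chi_1(\varpi)\chi_2(\varpi)\cdot q^{-(s+1/2)},
\]
matching \eqref{eq-unramified-J2}; the appearance of $\chi_1(\varpi)\chi_2(\varpi)$ is exactly the $\det$-shift one expects from pushing the torus element through, and the single power $q^{-(s+1/2)}$ (rather than a full series) should come out because the geometric series in $m$ resums against the $\chi_i$-series in the Whittaker value.

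The main obstacle, and the step I would spend the most care on, is the Bruhat/Iwasawa manipulation of $\gamma\,u(0,0,z)$ for $z\notin\mathcal{O}_F$ inside the \emph{metaplectic} group: one has to track Ranga Rao's cocycle $c(\cdot,\cdot)$ and the factor $\gamma_\psi$ through the decomposition, identify precisely which unipotent piece lands to the left of $\gamma$ (so the section's $N_P$-equivariance via $\mathcal{W}(\tau,\psi_2)$ applies and, crucially, the Whittaker character there is trivial on $\mathcal{O}_F$-data but not obviously so for the large-$z$ contribution), and confirm the leftover compact-group element genuinely lies in $\widetilde{\mathrm{Sp}}_4(\mathcal{O}_F)$ with trivial metaplectic coordinate. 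Once that normalization is pinned down, the remaining work is the bookkeeping of volumes, $|a|$-powers, and the finite geometric sum, which is routine. I would also double-check the edge case $\ord(a)=0$, where \eqref{eq-unramified-J2} predicts $J_2(a)=0$ (empty numerator), consistent with the Whittaker value $W_\tau^0\left(\begin{smallmatrix}\varpi^{-m}\\&1\end{smallmatrix}\right)=0$ for all $m\ge1$.
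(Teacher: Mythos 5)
Your reduction of $J_2(a)$ is essentially the paper's first step: conjugate $u(0,0,z)$ across $\gamma$, split off the torus element $\operatorname{diag}(1,z^{-1},z,1)$ in the Levi of $P$, and absorb the leftover pieces on the right into $\mathrm{Sp}_4(\mathcal{O}_F)$ (this works because $z^{-1}\in\varpi\mathcal{O}_F$ when $z\notin\mathcal{O}_F$; note that no $N_P$-equivariance or Whittaker-character argument is needed for the leftover unipotent — it simply lies in the maximal compact). Your use of the central character of $\tau$, of Lemma~\ref{lemma-CasselmanShalika} to truncate the sum to $1\le m\le \ord(a)$, and your treatment of the edge case $\ord(a)=0$ are also in line with the paper, as is the identification of $\gamma_\psi$ and the cocycle as the delicate bookkeeping.

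The genuine gap is in how you pass from the resulting finite sum over $m$ to the single term $q^{-(s+1/2)}$ in \eqref{eq-unramified-J2}. You assert that ``the geometric series in $m$ resums against the $\chi_i$-series in the Whittaker value''; it does not. After the reduction one is left with
\begin{equation*}
J_2(a)=|a|^{\frac12}\sum_{m=1}^{\ord(a)}\chi_1(\varpi)^m\chi_2(\varpi)^m\,
\frac{\chi_1(\varpi)^{\ord(a)-m+1}-\chi_2(\varpi)^{\ord(a)-m+1}}{\chi_1(\varpi)-\chi_2(\varpi)}\,q^{-ms}
\int\limits_{\mathcal{O}_F^\times}\gamma_\psi(\varpi^{m}u^{-1})^{-1}\psi(\varpi^{-m}u)\,du,
\end{equation*}
and no algebraic identity collapses this sum: if the unit-group integrals were a common nonzero constant, $J_2(a)$ would be a polynomial in $q^{-s}$ of degree $\ord(a)$, not a single power. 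The collapse happens for an analytic reason: the integral $\int_{\mathcal{O}_F^\times}\gamma_\psi(\varpi^{m}u^{-1})^{-1}\psi(\varpi^{-m}u)\,du$ vanishes for every $m\ge 2$ and equals $q^{-1/2}$ for $m=1$. This is exactly the content of Lemma~\ref{lemma-local-integral-gamma-psi-vanishing} and the appeal to Szpruch's computations (for even $m\ge 2$ the $\gamma_\psi$-factor is harmless and one is reduced to the vanishing of $\int_{\mathcal{O}_F^\times}\psi(\varpi^{-m}u)\,du$; for odd $m\ge 3$ it is a genuinely metaplectic Gauss-sum cancellation; for $m=1$ one needs the normalized Gauss-sum value $q^{-1/2}$). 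Your proposal flags the $\gamma_\psi$-bookkeeping as something ``to check'' and hopes the oscillation of $\psi(z)$ ``selects the volume correctly,'' but this is precisely where the proof lives; replacing it by a resummation claim is not a repairable shortcut but a wrong mechanism, so the key lemma (vanishing for $m\ge2$ plus the $m=1$ evaluation) must be supplied for the argument to close.
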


\begin{proof}
To deal with the integral when $z\in F\setminus \mathcal{O}_F$, we consider the following matrix identity
\begin{equation*}
\begin{split}
\gamma  u(0,0,z) \gamma^{-1}   &=   \left( \begin{smallmatrix} 1 &&&\\&1&&\\&-z&1&\\ &&&1 \end{smallmatrix}\right) =     \left(\begin{smallmatrix} 1 &&&\\&z^{-1}&-1&\\&&z&\\ &&&1\end{smallmatrix}\right)    \left(\begin{smallmatrix} 1 &&&\\&&1&\\&-1&&\\ &&&1\end{smallmatrix}\right)   \left(\begin{smallmatrix} 1 &&&\\&1&-z^{-1}&\\&&1&\\ &&&1\end{smallmatrix}\right) .
\end{split}
\end{equation*}
Then
\begin{equation*}
\begin{split}
J_2(a) &= \sum_{m=1}^\infty  \int\limits_{ \varpi^{-m}  \mathcal{O}_F^\times}   \omega_\psi( (0, 0, z))  \Phi^0(a)       \tilde{f}_{\mathcal{W}(\tau,\psi_2), s} ^0  \left( \left(\begin{smallmatrix} a &\\&1  \end{smallmatrix}\right); \gamma u(0, 0, z)  \right)     
  dz     \\
 & = \sum_{m=1}^\infty  \int\limits_{ \varpi^{-m}  \mathcal{O}_F^\times}    \tilde{f}_{\mathcal{W}(\tau,\psi_2), s} ^0 \left( \left(\begin{smallmatrix} a & \\&1 \end{smallmatrix}\right);  \left(\begin{smallmatrix} 1 &&&\\&z^{-1}&-1&\\&&z&\\ &&&1\end{smallmatrix}\right)    \left(\begin{smallmatrix} 1 &&&\\&&1&\\&-1&&\\ &&&1\end{smallmatrix}\right)   \left(\begin{smallmatrix} 1 &&&\\&1&-z^{-1}&\\&&1&\\ &&&1\end{smallmatrix}\right)      \right)   \psi(z)dz \\
 & = \sum_{m=1}^\infty  \int\limits_{ \varpi^{-m}  \mathcal{O}_F^\times}    \tilde{f}_{\mathcal{W}(\tau,\psi_2), s} ^0  \left(\left(\begin{smallmatrix} a & \\&1 \end{smallmatrix}\right)  ; \left(\begin{smallmatrix} 1&&&\\&z^{-1}&&\\&&z&\\ &&&1\end{smallmatrix}\right)    \right)        \psi(z)dz \\
 &=   \sum_{m=1}^\infty  \int\limits_{ \varpi^{-m} \mathcal{O}_F^\times}   \gamma_\psi(z^{-1})^{-1} |z^{-1}|^{s+3/2} W_\tau^0  \left( \begin{smallmatrix} a & \\ & z^{-1} \end{smallmatrix}\right) \psi(z)dz.
\end{split}
\end{equation*}
Note that
\begin{equation*}
\begin{split}
W_\tau^0  \left( \begin{smallmatrix} a & \\ & z^{-1} \end{smallmatrix}\right) &=  W_\tau^0  \left(   \left( \begin{smallmatrix} z^{-1} & \\ & z^{-1} \end{smallmatrix}\right)  \left( \begin{smallmatrix} az & \\ & 1 \end{smallmatrix}\right)    \right) \\
&=\chi_1(z^{-1})\chi_2(z^{-1}) W_\tau^0  \left( \begin{smallmatrix} az & \\ & 1 \end{smallmatrix}\right)\\
  &=  \begin{cases}
\chi_1(z^{-1})\chi_2(z^{-1}) |az|^{\frac{1}{2}}  \cdot \frac{\chi_1(\varpi)^{\ord(az)+1}-\chi_2(\varpi)^{\ord(az)+1}}{\chi_1(\varpi)-\chi_2(\varpi)}       & \text{ if }\ord(z)\ge -\ord(a) \\
0 & \text{ if }\ord(z)<-\ord(a)
\end{cases}
\end{split}
\end{equation*}
where the last equality follows from Lemma \ref{lemma-CasselmanShalika}. If $\ord(a)=0$, then for any $z\in  \varpi^{-m} \mathcal{O}_F^\times$ with $m\ge 1$, we have $\ord(z)< -\ord(a)$, and hence $J_2(a)=0$. If $\ord(a)>0$, then
\begin{equation}
\begin{split}
J_2(a) = \sum_{m=1}^{\ord(a)}  \int\limits_{ \varpi^{-m} \mathcal{O}_F^\times}    \gamma_\psi(z^{-1})^{-1} |z^{-1}|^{s+3/2} W_\tau^0  \left( \begin{smallmatrix} a & \\ & z^{-1} \end{smallmatrix}\right) \psi(z)dz.
\end{split}
\label{eq-unramified-J2-reduction1}
\end{equation}

Now we assume $\ord(a)>0$. Then 
\begin{equation*}
\begin{split}
J_2(a)  &= \sum_{m=1}^{\ord(a)}  \int\limits_{ \varpi^{-m} \mathcal{O}_F^\times}   \gamma_\psi(z^{-1})^{-1} |z^{-1}|^{s+3/2}  \chi_1(z^{-1})\chi_2(z^{-1}) |az|^{\frac{1}{2}}  \cdot \frac{\chi_1(\varpi)^{\ord(az)+1}-\chi_2(\varpi)^{\ord(az)+1}}{\chi_1(\varpi)-\chi_2(\varpi)}      \psi(z)dz \\
&=    |a|^{\frac{1}{2}}  \sum_{m=1}^{\ord(a)}     \chi_1(\varpi^m)\chi_2(\varpi^m)  
\frac{\chi_1(\varpi)^{\ord(a)-m+1}-\chi_2(\varpi)^{\ord(a)-m+1}}{\chi_1(\varpi)-\chi_2(\varpi)}   \cdot q^{-m(s+1)}      \int\limits_{ \varpi^{-m} \mathcal{O}_F^\times} \gamma_\psi(z^{-1})^{-1}  \psi(z)dz \\
&=    |a|^{\frac{1}{2}}  \sum_{m=1}^{\ord(a)}     \chi_1(\varpi^m)\chi_2(\varpi^m)  
\frac{\chi_1(\varpi)^{\ord(a)-m+1}-\chi_2(\varpi)^{\ord(a)-m+1}}{\chi_1(\varpi)-\chi_2(\varpi)}   \cdot q^{-ms}      \int\limits_{\mathcal{O}_F^\times} \gamma_\psi( \varpi^{m} u^{-1})^{-1}  \psi( \varpi^{-m} u)du.
\end{split}
\end{equation*}
We remind the reader that here both $dz$ and $du$ are additively invariant. 

In Lemma \ref{lemma-local-integral-gamma-psi-vanishing} below, we will prove that only the term corresponding to $m=1$ contributes to the sum. Assuming Lemma \ref{lemma-local-integral-gamma-psi-vanishing} for the moment, then for $\ord(a)>0$ we have
\begin{equation*}
J_2(a)= |a|^{\frac{1}{2}} \cdot  \chi_1(\varpi)\chi_2(\varpi)  
\frac{\chi_1(\varpi)^{\ord(a)}-\chi_2(\varpi)^{\ord(a)}}{\chi_1(\varpi)-\chi_2(\varpi)}   \cdot q^{-s}      \int\limits_{\mathcal{O}_F^\times} \gamma_\psi( \varpi u^{-1})^{-1}  \psi( \varpi^{-1} u)du.
\end{equation*}
It follows from the formula $J_1(\mathbb{F},\psi,\chi^0)=q^{-\frac{1}{2}}$ in the proof of \cite[Lemma 1.12]{Szpruch2009} that
\begin{equation*}
\begin{split}
\int\limits_{\mathcal{O}_F^\times} \gamma_\psi( \varpi u^{-1})^{-1}  \psi(\varpi^{-1} u )  du =q^{-1/2} .
\end{split}
\end{equation*}
Therefore, 
\begin{equation*}
\begin{split}
J_2(a) &=   |a|^{1/2}\cdot   \frac{\chi_1(\varpi)^{\ord(a)}  - \chi_2(\varpi)^{\ord(a)} }{\chi_1(\varpi)-\chi_2(\varpi)} \cdot  \chi_1(\varpi)\chi_2(\varpi)  \cdot  q^{-s-1/2} .
\end{split}
\end{equation*}
Note that the above formula also applies to $J_2(a)=0$ when $\ord(a)=0$.

Thus, we will finish the proof of Proposition \ref{prop-unramified-J2} once we take care of Lemma \ref{lemma-local-integral-gamma-psi-vanishing}.
\end{proof}

\begin{lemma}
For any $m\ge 2$, we have
$$
  \int\limits_{\mathcal{O}_F^\times} \gamma_\psi( \varpi^{m} u^{-1})^{-1}  \psi( \varpi^{-m} u)du=0.
$$
\label{lemma-local-integral-gamma-psi-vanishing}
\end{lemma}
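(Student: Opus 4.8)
The plan is to exploit that, for $m\ge 1$, the function $u\mapsto \gamma_\psi(\varpi^m u^{-1})^{-1}$ on $\mathcal{O}_F^\times$ is constant on cosets of $1+\varpi\mathcal{O}_F$, whereas for $m\ge 2$ the additive character $u\mapsto \psi(\varpi^{-m}u)$ is nontrivial on each such coset; orthogonality of characters then forces the integral to vanish.

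First I would record the invariance of the Weil factor. Since the residue characteristic of $F$ is odd, Hensel's lemma gives $1+\varpi\mathcal{O}_F\subseteq (F^\times)^2$. Hence, for $c\in\mathcal{O}_F^\times$ and any $u\in c(1+\varpi\mathcal{O}_F)$, I may write $u=cb^2$ with $b\in\mathcal{O}_F^\times$, so that $\varpi^m u^{-1}=(\varpi^m c^{-1})b^{-2}$, and the relation $\gamma_\psi(ab^2)=\gamma_\psi(a)$ yields $\gamma_\psi(\varpi^m u^{-1})^{-1}=\gamma_\psi(\varpi^m c^{-1})^{-1}$. Thus the integrand depends on $u$ only through its class in $\mathcal{O}_F^\times/(1+\varpi\mathcal{O}_F)\cong k^\times$, the residue field units.

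Next I would decompose $\mathcal{O}_F^\times=\bigsqcup_{c} c(1+\varpi\mathcal{O}_F)$, with $c$ running over a fixed set of representatives of $k^\times$, and on each piece pull out the now-constant factor $\gamma_\psi(\varpi^m c^{-1})^{-1}$. The remaining integral is, after the substitutions $u=c+v$ and $v=\varpi w$,
\[
\int_{c(1+\varpi\mathcal{O}_F)}\psi(\varpi^{-m}u)\,du
=\psi(\varpi^{-m}c)\int_{\varpi\mathcal{O}_F}\psi(\varpi^{-m}v)\,dv
=\psi(\varpi^{-m}c)\,q^{-1}\int_{\mathcal{O}_F}\psi(\varpi^{1-m}w)\,dw.
\]
Since $\psi$ is unramified, the character $w\mapsto\psi(\varpi^{1-m}w)$ of the compact group $\mathcal{O}_F$ is nontrivial precisely when $\ord(\varpi^{1-m})=1-m<0$, i.e.\ when $m\ge 2$, so the last integral is $0$. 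Summing over the finitely many cosets $c$ then gives the claim.

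I expect no serious obstacle here: the only point deserving care is the use of odd residue characteristic to obtain $1+\varpi\mathcal{O}_F\subseteq (F^\times)^2$, which is exactly what makes the $\gamma_\psi$-dependence ``level one'' and hence orthogonal to the ``level $m$'' character $\psi(\varpi^{-m}\,\cdot\,)$ when $m\ge 2$. In particular no Gauss-sum evaluation is needed for this lemma; the genuinely arithmetic input $\int_{\mathcal{O}_F^\times}\gamma_\psi(\varpi u^{-1})^{-1}\psi(\varpi^{-1}u)\,du=q^{-1/2}$ enters only through the surviving $m=1$ term handled earlier in the proof of Proposition~\ref{prop-unramified-J2}.
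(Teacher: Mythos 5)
Your proof is correct, and it takes a genuinely different route from the paper. The paper disposes of this lemma by citing Szpruch's work: for even $m$ it invokes the proof of \cite[Lemma 1.11]{Szpruch2009} with $\chi=1$ (and $e=0$ in odd residue characteristic), and for odd $m\ge 3$ it first uses $\gamma_\psi(\varpi^m u^{-1})^{-1}=\gamma_\psi(\varpi^{-m}u)^{-1}$ to recognize the integral as Szpruch's $J_m(\mathbb{F},\psi,\chi)$ with $\chi=1$, which vanishes by the same reference. You instead give a self-contained argument, uniform in $m\ge 2$: Hensel's lemma (odd residue characteristic) gives $1+\varpi\mathcal{O}_F\subseteq (F^\times)^2$, so by $\gamma_\psi(ab^2)=\gamma_\psi(a)$ the Weil-factor term is constant on cosets of $1+\varpi\mathcal{O}_F$, and on each coset the integral of $\psi(\varpi^{-m}\,\cdot\,)$ vanishes because $\psi$ is unramified and $1-m<0$; your handling of the additive measure (the paper explicitly takes $du$ additively invariant) and of the coset decomposition $c(1+\varpi\mathcal{O}_F)=c+\varpi\mathcal{O}_F$ is correct. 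What your approach buys is the elimination of the external reference and of the parity case split, exposing the mechanism as plain character orthogonality (``level one'' dependence of $\gamma_\psi$ against a ``level $m$'' additive character); what the paper's approach buys is brevity and consistency, since Szpruch's results are needed anyway for the surviving $m=1$ term, where the Gauss-sum value $q^{-1/2}$ from \cite[Lemma 1.12]{Szpruch2009} enters --- as you correctly note, that input is not needed for this vanishing statement itself.
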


\begin{proof}
This follows from the proof of \cite[Lemma 1.11]{Szpruch2009}. 

For even $m$, we take $\chi=1$ in the proof of \cite[Lemma 1.11]{Szpruch2009}, and note that $e=0$ since the residue characteristic of $F$ is odd. 

Now we consider the case where $m$ is odd (hence $m\ge 3$). 
Note that
\begin{equation*}
    \gamma_\psi( \varpi^{m} u^{-1})^{-1} = \gamma_\psi( \varpi^{m} u^{-1} (\varpi^{-m} u)^2 )^{-1} =\gamma_\psi( \varpi^{-m} u)^{-1}.
\end{equation*}
Thus
\begin{equation}
\label{eq-local-integral-with-gamma-psi}
\int\limits_{\mathcal{O}_F^\times} \gamma_\psi( \varpi^{m} u^{-1})^{-1}  \psi( \varpi^{-m} u)du=\int\limits_{\mathcal{O}_F^\times} \gamma_\psi( \varpi^{-m} u )^{-1}  \psi( \varpi^{-m} u)du.
\end{equation}
The right-hand side of \eqref{eq-local-integral-with-gamma-psi} is equal to the integral $J_m(\mathbb{F}, \psi, \chi)$ in \cite{Szpruch2009} with $\chi=1$, which is zero by the proof of \cite[Lemma 1.11]{Szpruch2009}. This finishes the proof of Lemma \ref{lemma-local-integral-gamma-psi-vanishing}.
\end{proof}

Now we plug in the formulas for $J_1(a)$ and $J_2(a)$ from Proposition \ref{prop-unramified-J1} and Proposition \ref{prop-unramified-J2} and the formula for $W_\pi^0$ from Lemma \ref{lemma-CasselmanShalika} into (\ref{eq-local-zeta-integral-1}), so we get
\begin{equation}
\begin{split}
&I(W_\pi^0, \Phi^0, \tilde{f}^0_{\mathcal{W}(\tau,\psi_2), s} )\\
  =&  \sum_{k=0}^\infty   \int\limits_{\varpi^k \mathcal{O}_F^\times}   \frac{\chi(\varpi)^{\ord(a)+1}-\chi(\varpi)^{-\ord(a)}}{\chi(\varpi)-1}  \cdot \frac{\chi_1(\varpi)^{\ord(a)+1}-\chi_2(\varpi)^{\ord(a)+1}}{\chi_1(\varpi)-\chi_2(\varpi)}    |a|^{s+\frac{1}{2}} d^\times a \\
  &  + \sum_{k=1}^\infty   \int\limits_{\varpi^k \mathcal{O}_F^\times}   \cdot \frac{\chi(\varpi)^{\ord(a)+1}-\chi(\varpi)^{-\ord(a)}}{\chi(\varpi)-1}  \cdot  \frac{\chi_1(\varpi)^{\ord(a)}-\chi_2(\varpi)^{\ord(a)}}{\chi_1(\varpi)-\chi_2(\varpi)}\cdot \chi_1(\varpi)\chi_2(\varpi) \cdot q^{-(s+\frac{1}{2})}   |a|^{s+\frac{1}{2}} d^\times a \\
  =&   \sum_{k=0}^\infty   \frac{\chi(\varpi)^{k+1}-\chi(\varpi)^{-k}}{\chi(\varpi)-1}  \cdot \frac{\chi_1(\varpi)^{k+1}-\chi_2(\varpi)^{k+1}}{\chi_1(\varpi)-\chi_2(\varpi)}    q^{-k(s+\frac{1}{2})} \\
  & +  \sum_{k=1}^\infty     \frac{\chi(\varpi)^{k+1}-\chi(\varpi)^{-k}}{\chi(\varpi)-1}  \cdot  \frac{\chi_1(\varpi)^{k}-\chi_2(\varpi)^{k}}{\chi_1(\varpi)-\chi_2(\varpi)}\cdot \chi_1(\varpi)\chi_2(\varpi) \cdot q^{-(k+1)(s+\frac{1}{2})}.  
\end{split}
\label{eq-local-zeta-integral2}
\end{equation}
Note that
\begin{equation*}
 \frac{\chi_1(\varpi)^{k+1}-\chi_2(\varpi)^{k+1}}{\chi_1(\varpi)-\chi_2(\varpi)} =p_k(\chi_1(\varpi), \chi_2(\varpi))
\end{equation*}
where $p_k(x_1, x_2)$ is the complete homogeneous symmetric polynomial of degree $k$ in two variables, whose generating function (see \cite[(2.5)]{Macdonald1995}) is
\begin{equation*}
\sum_{k=0}^\infty p_k(x_1, x_2)  t^k=  \frac{1}{(1-x_1 t)(1-x_2 t)}.
\end{equation*}
To ease notation, we denote
\begin{equation*}
A:=\chi(\varpi),  \ \ a_1:=\chi_1(\varpi),\ \  a_2:=\chi_2(\varpi),\ \  X:=q^{-(s+\frac{1}{2})}.
\end{equation*}
Then 
\begin{equation*}
\begin{split}
L(\pi\times \tau, s+\frac{1}{2}) &= \frac{1 }{(1-Aa_1 X)(1-Aa_2 X)(1-a_1 X)(1-a_2X)(1-A^{-1}a_1 X)(1-A^{-1}a_2 X)} , \\
L(\tau, \mathrm{Sym}^2, 2s+1) &= \frac{1}{(1-a_1^2 X^2 )(1-a_1 a_2 X^2)(1-a_2^2 X^2)}.
\end{split}
\end{equation*}
The first summation in (\ref{eq-local-zeta-integral2}) can be computed as follows:
\begin{equation*}
\begin{split}
&\sum_{k=0}^\infty   \frac{\chi(\varpi)^{k+1}-\chi(\varpi)^{-k}}{\chi(\varpi)-1}  \cdot \frac{\chi_1(\varpi)^{k+1}-\chi_2(\varpi)^{k+1}}{\chi_1(\varpi)-\chi_2(\varpi)}    q^{-k(s+\frac{1}{2})} \\
=&\sum_{k=0}^\infty   \frac{A}{A-1}  \cdot  p_k(a_1, a_2)  \cdot A^k X^k- \sum_{k=0}^\infty   \frac{1}{A-1}  \cdot  p_k(a_1, a_2) \cdot A^{-k}  X^k \\
=&   \frac{A}{(A-1)(1-Aa_1 X)(1-Aa_2 X)} - \frac{1}{(A-1)(1-A^{-1}a_1 X)(1-A^{-1}a_2 X)} \\
=&\frac{A(1-A^{-1}a_1 X)(1-A^{-1}a_2 X)-  (1-Aa_1 X)(1-Aa_2 X)    }{(A-1)(1-Aa_1 X)(1-Aa_2 X)(1-A^{-1}a_1 X)(1-A^{-1}a_2 X)} \\
=&   \frac{ A-(a_1+a_2)X+A^{-1} a_1 a_2 X^2-1+A(a_1+a_2)X-A^2 a_1 a_2 X^2   }{(A-1)(1-Aa_1 X)(1-Aa_2 X)(1-A^{-1}a_1 X)(1-A^{-1}a_2 X)} \\
=& \frac{1+(a_1+a_2)X -  \frac{A^2+A+1}{A} a_1 a_2 X^2}{(1-Aa_1 X)(1-Aa_2 X)(1-A^{-1}a_1 X)(1-A^{-1}a_2 X)}.
\end{split}
\end{equation*}
The second summation in (\ref{eq-local-zeta-integral2}) is equal to 
\begin{equation*}
\begin{split}
& \sum_{k=0}^\infty     \frac{\chi(\varpi)^{k+2}-\chi(\varpi)^{-(k+1)}}{\chi(\varpi)-1}  \cdot  \frac{\chi_1(\varpi)^{k+1}-\chi_2(\varpi)^{k+1}}{\chi_1(\varpi)-\chi_2(\varpi)}\cdot \chi_1(\varpi)\chi_2(\varpi) \cdot q^{-(k+2)(s+\frac{1}{2})} \\
=& \sum_{k=0}^\infty \frac{a_1 a_2  A^2 X^{2}}{A-1} \cdot p_k(a_1, a_2) \cdot A^k X^{k} - \sum_{k=0}^\infty \frac{a_1 a_2  A^{-1} X^2}{A-1} p_k(a_1, a_2) \cdot A^{-k} X^{k} \\
=&  \frac{a_1 a_2  A^2 X^{2}}{(A-1) (1-Aa_1 X)(1-Aa_2 X) } -\frac{a_1 a_2 X^2}{A(A-1) (1-A^{-1}a_1 X)(1-A^{-1}a_2 X)  } \\
=&  \frac{ a_1 a_2  A^3 X^{2} (1-A^{-1}a_1 X)(1-A^{-1}a_2 X)   - a_1 a_2X^2(1-Aa_1 X)(1-Aa_2 X)  }{A(A-1) (1-Aa_1 X)(1-Aa_2 X)(1-A^{-1}a_1 X)(1-A^{-1}a_2 X)} \\
=& \frac{1 }{(1-Aa_1 X)(1-Aa_2 X)(1-A^{-1}a_1 X)(1-A^{-1}a_2 X)} \cdot  \left(  \frac{A^2+A+1}{A} a_1 a_2 X^2  -a_1 a_2(a_1+a_2)X^3 -a_1^2 a_2^2 X^4 \right).
 \end{split}
\end{equation*}
Thus, 
\begin{equation*}
\begin{split}
&I(W_\pi^0, \Phi^0, \tilde{f}^0_{\mathcal{W}(\tau,\psi_2), s} ) \\
&=  \frac{1 }{(1-Aa_1 X)(1-Aa_2 X)(1-A^{-1}a_1 X)(1-A^{-1}a_2 X)} \cdot  \left( 1+(a_1+a_2)X   -a_1 a_2(a_1+a_2)X^3 -a_1^2 a_2^2 X^4   \right)\\
&=  \frac{1 }{(1-Aa_1 X)(1-Aa_2 X)(1-A^{-1}a_1 X)(1-A^{-1}a_2 X)}  \cdot  \left( 1-a_1a_2 X^2  \right) \left( 1+a_1 X \right) \left( 1+a_2 X \right)\\
&=\frac{(1-a_1a_2 X^2 )(1-a_1^2 X^2)(1-a_2^2 X^2) }{(1-Aa_1 X)(1-Aa_2 X)(1-A^{-1}a_1 X)(1-A^{-1}a_2 X) (1-a_1 X)(1-a_2 X)}\\
&=\frac{L(\pi\times \tau, s+\frac{1}{2}) }{L(\tau, \mathrm{Sym}^2, 2s+1)}.
\end{split}
\end{equation*}
This completes the proof of Theorem \ref{thm-local-unramified-main-result}.

\section*{Acknowledgements}

I would like to thank my advisor Jim Cogdell for his support and his comments on an earlier version of this paper. I also thank the anonymous referee for helpful comments and suggestions.

\end{document}